\definecolor{couleurCitations}{rgb}{0,0,0.85}
\definecolor{couleurRef}{rgb}{0.75,0,0}
\newtheorem{theorem}{Theorem}[section]
\newtheorem{lemma}[theorem]{Lemma}
\newtheorem{proposition}[theorem]{Proposition}
\theoremstyle{definition}
\newtheorem{definition}[theorem]{Definition}
\title{Positive Artin Presentations}
\author{L. Armas-Sanabria,  M. Eudave-Mu\~noz, J.P.D\'{\i}az-Gonz\'{a}lez, G. Hinojosa-Palafox}
\address{Centro de Investigaci\'on en Ciencias, Instituto de Investigaci\'on en
Ciencias B\'asicas y Aplicadas, Universidad Aut\'noma del Estado de Morelos, Av. Universidad 1001, Cuernavaca, 62209, Morelos, M\'exico.}
\email{lorenaarmas089@gmail.com}
\address{Instituto de Matem\'aticas, Universidad Nacional Aut\'onoma de M\'exico, Campus Cuernavaca, Morelos, M\'exico}
\email{mario@matem.unam.mx}
\address{Centro de Investigaci\'on en Ciencias, Instituto de Investigaci\'on en
Ciencias B\'asicas y Aplicadas, Universidad Aut\'noma del Estado de Morelos, Av. Universidad 1001, Cuernavaca, 62209, Morelos, M\'exico.}
\email{juanpablo.diaz@uaem.mx}
\address{Centro de Investigaci\'on en Ciencias, Instituto de Investigaci\'on en
Ciencias B\'asicas y Aplicadas, Universidad Aut\'noma del Estado de Morelos, Av. Universidad 1001, Cuernavaca, 62209, Morelos, M\'exico.}
\email{gabriela@uaem.mx}
\thanks{The first author is supported by a fellowship {\em Investigadoras por M\'exico} from CONAHCYT. The second author is supported by PAPIIT UNAM grant IN117423.}
\date{January 2024}
\keywords{Pure n-braids, Dehn surgery, positive Artin presentations.}
\subjclass[2020]{57M05, 57K10}
\begin{document}

\begin{abstract}

An integral framed, closed pure $n$-braid $\hat \beta$ in $S^3$ describes  a positive Artin presentation, if $\beta$ can be put   on a disk with holes such that each relation describes  a  positive path  and these paths are disjoint.
In the present paper  we classify  the closed, pure $n$-braids $\hat \beta \subset S^3$,  such that $\beta$ represents  a positive Artin presentation. Also we prove that  if such $\beta$ describes a positive Artin presentation
then  $\hat \beta$ is strongly invertible.  
 Such  positive Artin presentation give us the fundamental group of closed, connected and orientable $3$-manifolds $M^3$, and in fact, by giving an example, we show that there exist $3$-manifolds whose fundamental group does not admit a positive Artin presentation. 
 \end{abstract}
 
 \maketitle

\section{Introduction}

In the $70$'s Gonz\'alez-Acu\~na following the work of Artin, defined what he called Artin $n$-presentations. He developed such a theory and showed that the fundamental group of  any closed, connected and orientable $3$-manifold has an Artin $n$-presentation. In fact, he  also proved that if a group given in terms of generators and relations has an Artin $n$-presentation then it is the fundamental group of a $3$-manifold \cite{GA}.
Furthermore, he showed that any closed, connected, orientable $3$-manifold can be seen as an open book with planar page and  disconnected binding and  that increasing the genus of the page it is possible to  obtain a connected binding.
 All this follows from the fact that any closed, connected and orientable $3$-manifold can be obtained by  Dehn surgery on an integral framed positive, closed pure $n$-braid,  (see \cite{L}, \cite{L1}, \cite{R}). Given an Artin $n$-presentation, this determines a 3-manifold $M$, in fact it determines a Heegaard diagram of a 3-manifold $M$ and a framed pure $n$-braid $\beta$ such that $M$ is obtained by Dehn surgery on $\hat \beta$, the closure of $\hat \beta$. Conversely, given a framed pure $n$-braid, this determines an Artin $n$-presentation.

In \cite{H}, Hempel shows that every    closed, connected and orientable 3-manifold  has a positive Heegaard diagram.
 Gonz\'alez-Acu\~na  motivated by the work of Hempel  and by the fact that the fundamental group of  any  such $3$-manifold has an Artin $n$-presentation, asked if it is possible that the fundamental group of any closed, connected and orientable $3$-manifold would have a positive Artin $n$-presentation (which is given by a special Heegaard diagram). 

Here we show that in general this is not true. So, there are $3$-manifolds whose fundamental group does not admit a positive Artin $n$-presentation. Also, we give a characterization of the closed, pure $n$-braids $\hat \beta$,  such that $\beta$ admits a positive Artin $n$-presentation. In the case of  closed, pure $3$-braids these are  links of the form
$\hat \beta = \widehat {\sigma_1^{2e_1}({\sigma_2\sigma_1\sigma_2})^{2e}}$ or $\hat \beta = \widehat {\sigma_2^{2f_1}({\sigma_2\sigma_1\sigma_2})^{2e}}$ (of course, depending of certain conditions on the integral framing and the integer numbers $e_1,f_1$ and $e$). 
The results of this paper are the following.

\vskip15pt

\begin{theorem} \label{mainintro} There is a set $\mathcal{P}$ of framed pure $n$-braids, which contains all the pure $n$-braids   which  admit a positive Artin n-presentation  (depending on the framings).
\end{theorem}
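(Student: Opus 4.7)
The plan is to describe an explicit set $\mathcal{P}$ of framed pure $n$-braids built from a short list of ``positive building blocks'' and then show, by a geometric argument on the disk with holes, that every pure $n$-braid admitting a positive Artin $n$-presentation must lie in $\mathcal{P}$. First I would fix a disk $D$ with $n$ holes $H_1,\dots,H_n$ and, via the standard braid/presentation dictionary, associate to $\beta$ a system of $n$ loops $\gamma_1,\dots,\gamma_n$ on $D$, where $\gamma_i$ is based near $H_i$ and reads off the $i$-th relation. A positive Artin $n$-presentation, by hypothesis, means these loops can be represented by pairwise disjoint positive simple paths.

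Second, I would carry out the key combinatorial step: classify the isotopy classes of pairwise disjoint simple based loops on $D$ whose winding numbers around every hole are nonnegative. Cutting $D$ along such a system produces planar pieces whose combinatorics are encoded by a rooted planar tree recording the nesting pattern of the loops, together with multiplicities telling how many times each loop wraps around each hole it encircles. Positivity forces all multiplicities to have the same sign, and the basepoint constraint (each $\gamma_i$ is attached near $H_i$) rigidly determines which holes $\gamma_i$ is allowed to enclose. This cuts down the admissible configurations to a small list.

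Third, I would translate each admissible curve configuration back into a braid word. A loop that encircles a subset $S$ of holes with uniform positive multiplicity corresponds, on the braid side, to a positive power of the full twist on the sub-braid indexed by $S$, while parallel loops around two adjacent holes contribute positive powers of the corresponding standard generator $\sigma_i$. Composing these contributions yields an explicit description of $\mathcal{P}$ as products of such positive sub-twists, and tracking the self-linking of each $\gamma_i$ in the framing imposes the accompanying numerical conditions on the exponents. Specializing this description to $n=3$ should recover the two families $\widehat{\sigma_1^{2e_1}(\sigma_2\sigma_1\sigma_2)^{2e}}$ and $\widehat{\sigma_2^{2f_1}(\sigma_2\sigma_1\sigma_2)^{2e}}$ announced in the introduction, which is a useful sanity check.

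The main obstacle is the combinatorial/geometric step of ruling out exotic configurations. One must argue that no disjoint positive system of based loops on the $n$-punctured disk can mix nesting and parallelism in ways not captured by the list above. The natural approach is an innermost-loop argument together with induction on $n$: take an innermost $\gamma_i$, identify it with a positive power of a full twist on the sub-disk it bounds, cut along it, and apply the inductive hypothesis on the two resulting pieces. The delicate point is handling the basepoint constraints, which distinguish this from a purely free-homotopy classification, and verifying that the inductive decomposition interacts correctly with the framing data.
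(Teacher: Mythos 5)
Your overall strategy is the same as the paper's: model the positive Artin $n$-presentation by a system of disjoint positive paths on the disk with holes, classify the admissible configurations by their nesting pattern (your rooted planar tree is exactly the paper's hierarchy of consecutive blocks $P_{k_j}$ and their unions), and translate each nested block with multiplicity into a power of a full twist $\Delta_{n_j}^{e_{s_j}}$ on the corresponding sub-braid. The paper likewise proceeds by induction on $n$ (cases $n=2$, $n=3$, then the general recursive step), and your innermost-loop induction is the same decomposition read from the inside out.

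There is, however, a genuine gap in your step 3. You assert that positivity forces all multiplicities to be positive, so that $\mathcal{P}$ consists only of products of \emph{positive} sub-twists. This is false, and the containment claimed in the theorem would fail for your $\mathcal{P}$. The paper's analysis exhibits positive diagrams in which a path $\gamma_i$ \emph{skips} some holes of its block before reaching $V_i$ (case 4(a)(ii) for $n=3$, Figure \ref{3diagram}~D, and Figures \ref{ndiagram}, \ref{ndiagrams} in general): all crossings with the cut system are still positive and the paths are still disjoint, yet when such a configuration is pushed back into $D\times I$ to recover the braid, one must perform \emph{exactly one negative} full twist on a subblock nested inside a block that already carries at least one positive full twist. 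This is why the paper's definition of $\mathcal{P}$ explicitly admits $e_{s_{j+1}}=-1$ when $e_{s_j+s_{j+1}}\geq 1$, and why your own $n=3$ sanity check must include the family $\widehat{\sigma_2^{-2}(\sigma_2\sigma_1\sigma_2)^{2e}}$ with $e\geq 1$ (the $f_1=-1$ case of Theorem \ref{3pure}), which is not a product of positive sub-twists. A secondary, more cosmetic issue: the $\gamma_i$ are arcs from cyclically ordered points on the outer boundary to the respective hole boundaries, not loops based near the holes, and positivity means every crossing with the arcs $A_j$ is positive, which is strictly stronger than nonnegative winding numbers; your classification in step 2 should be run with the former condition, since it is precisely the interaction between the outer-boundary basepoints and the requirement that every crossing be positive that rules out the exotic configurations.
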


\vskip15pt

\begin{theorem} \label{stronglyinvertible} Let $\beta \in P_n$ be a pure $n$-braid and let $\hat \beta$ be the link obtained by closing $\beta$, contained in $S^3$. If $\beta $ produces   a positive Artin $n$-presentation, then $\hat \beta$ is strongly invertible.
\end{theorem}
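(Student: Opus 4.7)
The plan is to construct an explicit orientation-preserving involution $\tau$ of $S^3$ that preserves $\hat{\beta}$ setwise, with fixed set an unknotted circle meeting each component of $\hat{\beta}$ transversely in exactly two points. By hypothesis, $\beta$ admits a realization on a disk $D$ with $n$ holes in which the $n$ defining relations of the associated Artin presentation appear as pairwise disjoint positive paths $\gamma_1, \ldots, \gamma_n$ on $D$. First I would fix this realization and place the punctures $p_1, \ldots, p_n$ along a straight arc $\ell$ in the interior of $D$ that divides $D$ into two half-disks.

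The key geometric step is to show that the paths $\gamma_1, \ldots, \gamma_n$ can be simultaneously isotoped, relative to the punctures, so as to be invariant under the reflection $\rho$ of $D$ across $\ell$, with $\rho$ reversing the orientation of each $\gamma_i$. Here positivity is decisive: a positive simple loop enclosing a subset of consecutive punctures bounds a disk on one side, so up to isotopy it may be taken as the boundary of a thin regular neighborhood of a sub-arc of $\ell$, which is automatically $\rho$-symmetric. Disjointness of the $\gamma_i$'s, together with a nesting argument (symmetrizing innermost loops first and then proceeding outward), would allow these symmetrizations to be performed simultaneously without introducing new intersections between the paths.

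Once the configuration on $D$ is $\rho$-symmetric, I would thicken $D$ to the solid torus $D \times S^1$ containing $\hat{\beta}$ as a standard closed braid, and define $\tau$ on $D \times S^1$ by $\tau(x, \theta) = (\rho(x), -\theta)$, extending naturally across the complementary solid torus (the binding region) of the open book decomposition of $S^3$ with page $D$. The resulting $\tau$ is orientation-preserving; its fixed set is the unknotted circle obtained by joining $\ell \times \{0\}$ and $\ell \times \{\pi\}$ through the binding; and $\tau(\hat{\beta}) = \hat{\beta}$ because each $\gamma_i$ is $\rho$-invariant. The axis meets each component of $\hat{\beta}$ transversely in the two points where $\gamma_i$ crosses $\ell$.

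The main obstacle will be the simultaneous symmetrization of the $\gamma_i$'s: verifying that disjoint positive paths can never link or braid in a way incompatible with a common reflection symmetry of $D$. For $n = 3$ the explicit classification in Theorem~\ref{mainintro} reduces this to inspecting the manifestly palindromic words $\sigma_1^{2e_1}(\sigma_2\sigma_1\sigma_2)^{2e}$ and $\sigma_2^{2f_1}(\sigma_2\sigma_1\sigma_2)^{2e}$, whose closures have manifest strong inversions. I would expect an induction on $n$, using the description of the set $\mathcal{P}$ of Theorem~\ref{mainintro} to structure the inductive step, to handle the general case.
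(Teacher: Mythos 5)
Your overall strategy---realize the inversion as the $\pi$-rotation of $S^3$ about a horizontal axis, acting on $D\times S^1$ by a reflection of the disk factor together with $\theta\mapsto-\theta$---is the same involution the paper uses. But the route you take to prove invariance of $\hat\beta$ has a concrete flaw and leaves the essential step unproved. The flaw: you ask for each relator arc $\gamma_i$ to be setwise invariant under the reflection $\rho$ of $D$ across $\ell$ with $\rho$ reversing its orientation, which forces $\rho$ to interchange the two endpoints of $\gamma_i$. That is impossible: one endpoint $p_i$ lies on the outer boundary $\partial W_0$ and the other lies on the hole boundary $\partial V_i$, and any reflection of the disk preserving the configuration of holes sends $\partial W_0$ to $\partial W_0$ and each $\partial V_i$ to itself. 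The arcs $\gamma_i$ are simply not the right objects to symmetrize; the symmetry lives on the closed braid (equivalently on the braid word, which must equal its own reverse), not on the Heegaard-diagram arcs in the page. Moreover, the step you yourself flag as ``the main obstacle''---the simultaneous symmetrization---is exactly the content of the theorem and is not carried out: you verify only $n=3$ by inspecting palindromic words, and for general $n$ you only ``expect an induction.'' Positivity of the individual words is not enough on its own; what makes the argument work is the structural conclusion of Theorem \ref{main}, namely that $\beta$ lies in $\mathcal{P}$, i.e.\ is a product of full twists on nested blocks of consecutive strands. That classification must come first, not as a fallback.

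For comparison, the paper's proof is a two-line deduction from Theorem \ref{main} together with Lemma \ref{strongly}, and Lemma \ref{strongly} avoids any symmetrization of arcs: a braid in $\mathcal{P}$ is obtained from the trivial braid $\hat\beta_0$ by $1/e$-surgeries on a family $L$ of nested unknotted circles lying in a horizontal plane $H$; the link $\hat\beta_0\cup L$ is visibly invariant under the $\pi$-rotation about a line $\bar u\subset H$ meeting every component in two points, and the Montesinos trick \cite{M} transports the strong inversion through the equivariant surgeries on $L$ to conclude that $\hat\beta$ is strongly invertible. If you want to keep your more direct, surgery-free formulation, the fix is to replace the symmetrization of the $\gamma_i$ by the algebraic observation that the full twists occurring in an element of $\mathcal{P}$ pairwise commute (a full twist on a block is central in the braid group of any block containing it) and each is equal to its own reverse, so $\beta$ equals its reverse word and $\tau(\hat\beta)=\widehat{\beta^{\mathrm{rev}}}=\hat\beta$; but either way the argument must pass through Theorem \ref{main} first.
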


The paper is organized as follows. In Section \ref{section2}, we define Artin presentations and survey some results in this theory which are used along the paper. In Section \ref{mainresults}, we describe a set
$\mathcal{P}$ of pure $n$-braids which produce positive
Artin $n$-presentations and prove the main results. In Section \ref{examples}, some examples are given of manifolds admitting a positive $n$-Artin presentation. It is also given an explicit example of a $3$-manifold whose fundamental group does not admit a positive Artin $n$-presentation.

\section {Preliminaries} \label{section2}
 
 In this Section, we define the concepts which will be used along the paper and give a survey of the main results in Artin Presentation Theory. For another exposition and developments of this theory see  \cite{Winkel}.
 
 \begin{definition} \label{artin}Given a presentation of a group $G = \langle x_1,x_2,\cdots,x_n : r_1,r_2,\cdots,r_n \rangle $, this presentation is an Artinian $n$-presentation if  it  satisfies the following equality $ \prod_{i=1}^{n} r_i x_i r_i^{-1} = \prod _{i=1}^n x_i$
in the free group $F_n  \hskip 10pt (: = F(x_1, x_2, \cdots , x_n) )$.
\end{definition}

The relations of the group in the  definition \ref{artin} can be seen geometrically. In what follows, let $D$ be a disk, let $V_1, V_2, \dots, V_n$ be a collection of disjoint disks in its interior, and for each $i$, let $y_i$ a point in the interior of $V_i$. Let $W=D-\textrm{int}(\cup V_i)$, so $W$ a disk with $n$ holes, whose boundary $\partial W$ consists of $n$ internal boundary components,  $\partial V_1, \partial V_2, \dots, \partial V_n$, and an external boundary component, denoted $\partial W_0 = \partial D$. Let $b_1,b_2, \dots,b_n$ be $n$ points cyclically ordered in the lower part of $\partial W_0$, as in Figure \ref{diskwithholes}. Let  $A_1, A_2,\cdots , A_n$  be disjoint arcs properly embedded in $W$, so that $\partial A_i$ consists of the point $b_i$ and a point $b_i'$ in $\partial V_i$. Let $A= \cup A_i$, note that $\textrm{int} (W-A)$ is a disk (see Figure \ref{diskwithholes}). If $p,q\in W-A$ are two points and $\gamma$ is any path from $p$ to $q$, transverse to $A$, then this $\gamma$ has associated  a word $w$ in the letters $x_1, x_2, \cdots , x_n$, where $x_i$ appears each time $\gamma$ crosses $A_i$ in a positive sense (that is, from left to right), and $x_i^{-1}$ appears each time $\gamma$ crosses $A_i$ in a negative sense (from right to left). We say that $\gamma$ represents $w$, which represents an element, also denoted by $w$, of $F(x_1, x_2, \cdots, x_n)$, the free group on $ x_1, x_2, \cdots, x_n$. 

Let $p_1,p_2,\dots,p_n$ be points cyclically ordered in the upper part of $\partial W_0$, in the interior of the arc determined by $b_1$ and $b_n$, as in Figure \ref{diskwithholes}. For each $i$, let $\gamma_i$ be a simple path going from $p_i$ to a point $a_i$ in $\partial V_i$, $a_i \not= b_i'$, i.e., $\gamma_i$ is an arc properly embedded in $W$, with one endpoint in $p$ and the other in $a_i$. Suppose  that the paths $\gamma_1,\gamma_2,\dots, \gamma_n$ are disjoint. Each $\gamma_i$ can be made transverse to $A$, and then determines a word $r_i$ in the free group  $F(x_1, x_2, \cdots, x_n)$. It is not difficult to see that the presentation $\langle x_1,x_2,\cdots,x_n : r_1,r_2,\cdots,r_n\rangle $ is an Artin $n$-presentation. In fact, the word  $ \prod_{i=1}^{n} r_i x_i r_i^{-1}$ can be represented by a simple path starting and ending in the point $p_1$, which is isotopic to a path going around $\partial W_0$, and which then represents the word $\prod _{i=1}^n x_i$.

\begin{figure}
\centering
\includegraphics[width=9 cm]{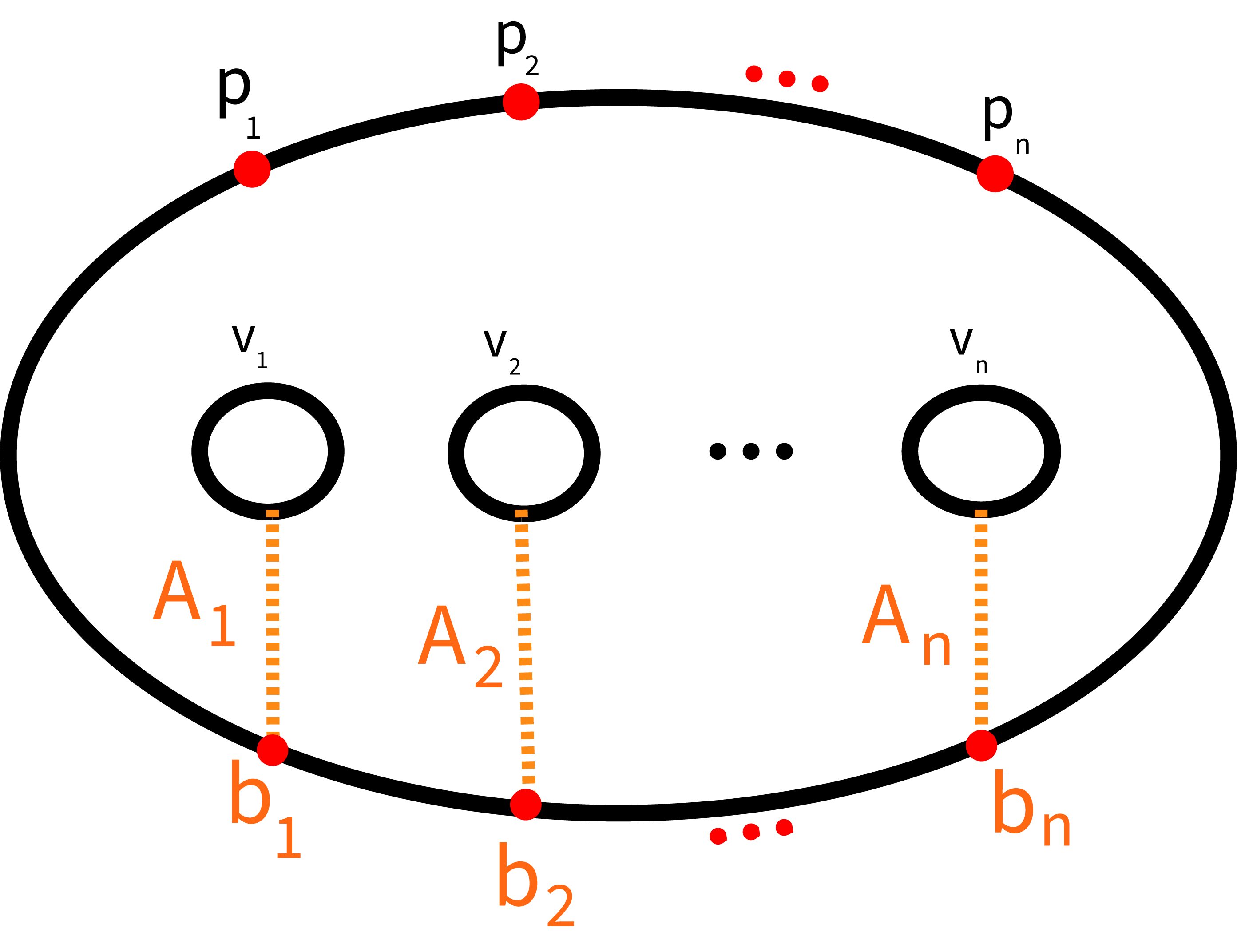}
\caption{The disk with $n$ holes }
\label{diskwithholes}
\end{figure}

Conversely, given an Artin $n$-presentation $\langle x_1,x_2,\cdots,x_n : r_1,r_2,\cdots,r_n \rangle $, it can be shown that there exists paths $\gamma_1,\gamma_2, \cdots, \gamma_n$ in $W$, with disjoint interiors, with $\gamma_i$ going from $p_i$ to a point in $\partial V_i$, whose associated word in the free group $F(x_1, x_2, \cdots, x_n)$ is precisely the word $r_i$. Examples of paths in a disk giving an Artin presentation are shown in Figures \ref{2diagram}, \ref{3diagram}, \ref{nonpositives}.

Other point of view that we can consider is the following. Consider the group of isotopy classes of homeomorphisms $h$ of $W$ such that $h\vert_{\partial W} =id_{\partial W}$. It is well known that this group can be identified with the integral framed pure $n$-braid group, and we denote it  by abuse of notation by $P_n$. Now we explain this relationship.

Consider the space $T=D\times I$, so we have a copy of the points $y_i$ in each of $D\times \{0 \}$ and $D\times \{1 \}$, denoted by $y_i^0$ and $y_i^1$, and also copies $V_i^0$ and $V_i^1$ of $V_i$. Let $\beta$ be a pure $n$-braid in $T$, i.e. it consists of $n$ disjoint arcs $\eta_1,\eta_2,\dots,\eta_n$, monotonic with respect to the product structure in $T$, and so that $\eta_i$ has endpoints in $y_i^0$ and $y_i^1$.
Let $N_i$ be a regular neighborhood of $\eta_i$, so that $N_i$ and $N_j$ are disjoint if $i\not= j$.
$N_i\cap D\times \{ 0 \}=V_{i}^0$, and similarly $N_i\cap D\times \{ 1 \} = V_{i}^1$. Choose points $a_{i}^{0}$, $a_{i}^{1}$ in each of $\partial V_{i}^0$ and $\partial V_{i}^1$, respectively (we can suppose that $a_{i}^1=a_{i}^0\times \{1\}$).

We define a framing on $\beta$ as a choice of $n$ arcs $\eta_i^*$, so that $\eta_i^*$ is contained in $\partial N_i$, has endpoints in  $a_i^0$ and $a_i^1$, but its interior is disjoint from the disks $V_i^0$ and $V_i^1$.
Note that $\eta_i$ and $\eta_i^*$ then determine a pure 2-braid in $T$, which is then equivalent to $\sigma_1^{2k_i}$, for some integer $k_i$. The collection of integers $k_1,k_2,\dots, k_n$ then determines the framing. We can do an isotopy on $T$, which fixes $\partial T - \textrm{int} D\times \{ 1 \}$, and which trivializes $\beta$, that is, at the end of isotopy we obtain a braid $\beta'$, which is a product braid $\{ y_1,y_2,\dots,y_n\}\times I$, and each arc $\eta_i^*$ becomes a product arc $a_i^0* \times I$. Then this defines a homeomorphism $h_\beta$ from $D\times \{ 1 \} -\textrm{int}(\cup V_i)$ to itself, which is the identity on $\partial (D\times \{ 1 \} -\textrm{int}(\cup V_i))$.
By identifying $W$ with $D\times \{ 1 \} -\textrm{int}(\cup V_i)$, we have shown that given $\beta$, we can construct a homeomorphism $h_\beta:W \rightarrow W$, which is the identity in all of $\partial W$.

Conversely, given a homeomorphism $h:W \rightarrow W$, such that $h\vert_{\partial W} =id_{\partial W}$,
we can find a pure framed $n$-braid $\beta$ on $D\times I$ whose associated homeomorphism is precisely $h$. To see this, take $T=D\times I$, and the trivial framed pure $n$-braid on it. $h$ can be seen as a homeomorphism of $D\times \{ 1 \}$, which is then isotopic to the identity homeomorphism, where $\partial D\times \{ 1 \}$ is being fixed during the isotopy. If this isotopy is realized in $T$, a pure $n$-braid is formed. 

Consider again a framed pure $n$-braid $\beta$ in $T$. Let $p_1^0,p_2^0,\dots,p_n^0$ be points cyclically ordered in the upper part of $\partial D \times\{ 0 \}$, take paths $B_1^0,B_2^0,\dots,B_n^0$ going from $p_i^0$ to $a_i^0$ for each $i$, assume that these paths are disjoint, and assume that are disjoint from the paths $A_i$'s defined above. Take the corresponding points $p_1^1,p_2^1,\dots,p_n^1$ in $\partial D \times\{ 1 \}$, and disjoint paths $B_1^1,B_2^1,\dots,B_n^1$ going from $p_i^1$ to $a_i^1$, which are disjoint from the $A_i^1$'s. Let $\eta_i^*$ be arcs that define a framing of $\beta$.
Let $\gamma_i^* = (B_i^1) \cup \eta_i^* \cup (A_i^0)^{-1} \cup (p_i^0 \times I)$, so this is a curve that starts and finishes at $p_i^1$. We consider it as a simple closed curve embedded in the boundary of the manifold $T -\textrm{int}(\cup N_i)$, which in fact is a genus $n$ handlebody. Note that considered in $T$, each curve $\gamma_i^*$ is a trivial curve, and the collection of curves $\gamma_i^*$ is just  a closure of the braid $\beta$. 

After performing the isotopy that trivializes $\beta$ note that  each $\gamma_i^*$ is transformed into a curve that consists of four arcs, $(B_i^0)^{-1}$ and $p_i^0 \times I$, which remain the same, $\eta_i^*$ which now looks like $a_i^0 \times I$, plus one arc $\gamma_i$ in $D\times \{ 1\}$ connecting $p_i^1$ to $a_i^1$. This shows that given a framed pure $n$-braid, it has associated  a collection of paths $\gamma_i$ in $W$, and then it determines an Artin $n$-presentation. Note that $\beta$ can be recovered from the $\gamma_i$'s, in fact, taking the collection of arcs $\eta_i^* \cup \gamma_i$, and pushing each of them into the interior of $D\times I$, keeping  its boundary fixed, we recover the braid $\beta$. Also, after performing the isotopy on $T$, the manifold $T -\textrm{int}(\cup N_i)$ can be identified with $W\times I$, and the curves $\gamma_i^*$ are simple closed curves on $\partial (W\times I)$, which determine a Heegaard diagram on $W\times I$ (see Figure \ref{Heegaard} for an example). 

\begin{figure}
\centering
\includegraphics[width=7 cm]{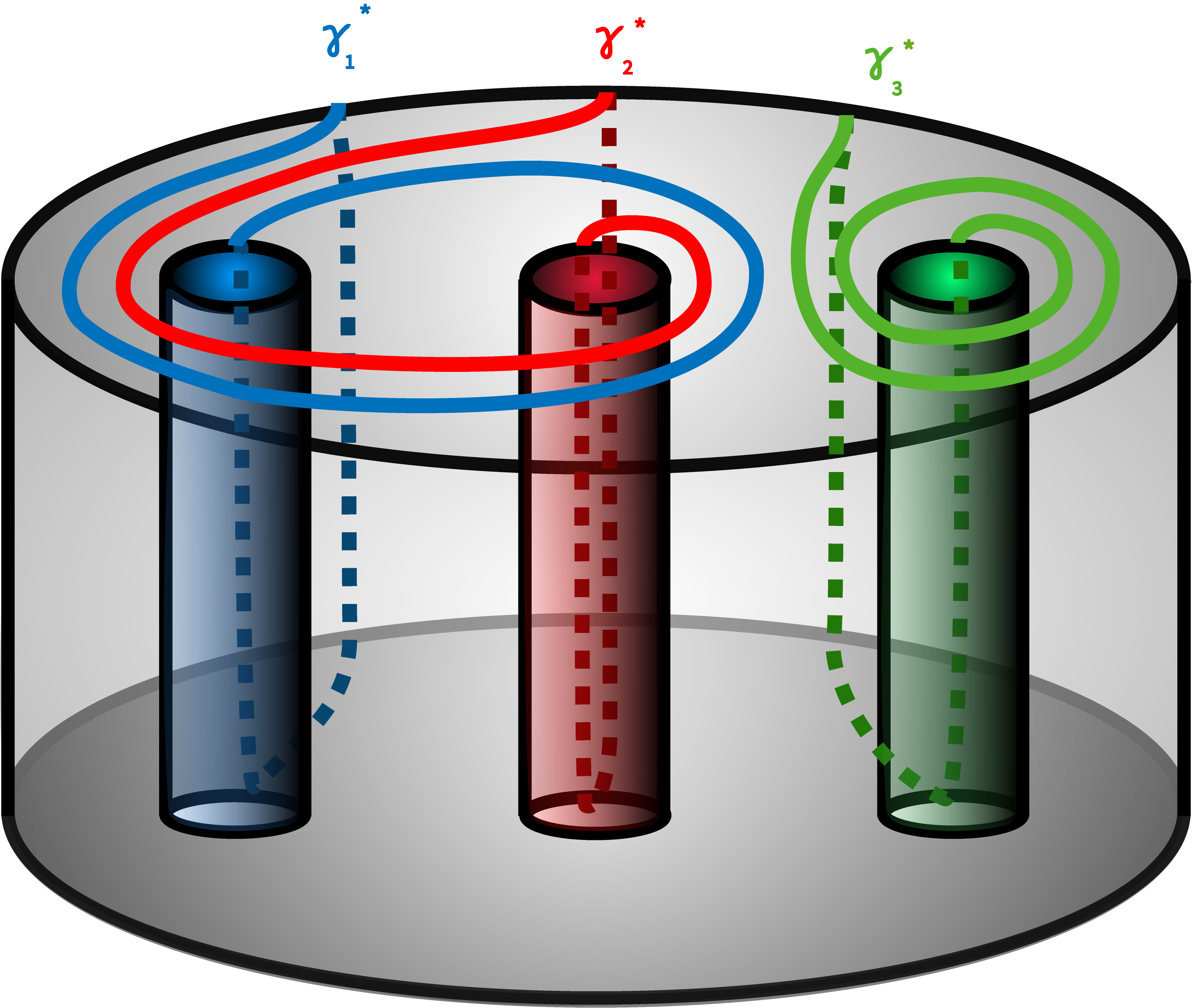}
\caption{A Heegaard diagram given by an Artin presentation}
\label{Heegaard}
\end{figure}

Consider the Heegaard diagram $(W\times I ; \gamma_1^*,\gamma_2^*,\dots, \gamma_n^*)$ just defined.  We can suppose that $W\times I$ is embedded in $S^3$ in an standard way. Let $M$ be the 3-manifold determined by this diagram, i.e. the manifold obtained by gluing 2-handles along $\gamma_i^*$ and then capping off with a 3-ball. Let $\alpha_i$ be the boundary of a regular neighborhood of $V_i^1\cup \gamma_i \cup p_i^1\times I \cup V_i^0$. This curve $\alpha_i$ is a trivial curve in the boundary of $W\times I\cup 2$-$handles$. So we can add 2-handles along all $\alpha_i$ and then along all $\gamma_i^*$ and then fill with many 3-balls, getting the same manifold $M$. Note that adding one 2-handle to each $\alpha_i$ we get a manifold with $n$ tori boundary components and one sphere boundary component. By capping off this sphere with a 3-ball, we get $S^3$ with the interior of $n$ tori removed. It follows that the resulting manifold is the exterior of the closed braid $\hat \beta$ defined above, and the curve $\gamma_i^*$ is just a framing on this link. 

Remember that if $L = \cup_1^n L_i$ is a rational framed link of $n$ components contained in $S^3$, then to do Dehn surgery on $L$  (following \cite{R}) consists in taking a regular neighborhood $N_i$ of $L_ i$  for all $i$, and a simple closed curve  $J_i$  in each $\partial N_i$, and then in this way, we get a new 3-manifold
 
  $$ M = S^3 -\cup \,\textrm{int} (N_i)\cup_h (T_1\cup \cdots \cup T_n)$$
 
  \noindent where  $T_i$ is a solid torus for all $i$, and $h$ is a union of homeomorphisms $h_i: \partial T_i \rightarrow \partial N_i \subset S^3$, such that a meridian curve $\mu _i$ of $T_i$ is glued along the curve $J_i$ determined by the framing, whose slope is  a rational number $p_i/q_i$ (with $p_i,q_i$ relative primes) or $\infty$. In this paper we are just considering integral framings.

This shows that if the Heegaard diagram $(W\times I ; \gamma_1^*,\gamma_2^*,\dots, \gamma_n^*)$ is considered to be in $S^3$, it determines a framed link so that doing Dehn surgery on this link we get the same 3-manifold $M$. In fact the curves $\gamma_i^*$ form a link in $S^3$, which is in fact the closure of the braid $\beta$, and the framing is given by the surface slope of the curves.

Note that the collection of disks $A_i\times I$ is a system of meridian disks for $W$, and each curve $\gamma_i^*$ intersects those disks only on $D\times \{1\}$, that is the intersection consists of the intersection of the arcs $\gamma_i$ with the arcs $A_i\times \{1 \}$. A presentation for $\pi_1(M)$ can be read from the Heegaard diagram $(W\times I ; \gamma_1^*,\gamma_2^*,\dots, \gamma_n^*)$, just by reading the intersection of the arcs $\gamma_i$ with the arcs $A_i\times \{1 \}$, and it follows that $\pi_1(M)= \langle x_1,x_2,\cdots,x_n : r_1,r_2,\cdots,r_n \rangle$, that is, $\pi_1(M)$  has an artinian presentation.

We have shown that given an artinian $n$-presentation $G= \langle x_1,x_2,\cdots,x_n : r_1,r_2,\cdots,r_n \rangle$, this determines a Heegaard diagram of a 3-manifold $M$ such that $\pi_1(M)=G$, and a closed pure framed $n$-braid $\beta$, such that $M$ is obtained by Dehn surgery on the closure $\hat \beta$ of $\beta$. Conversely, given a closed orientable 3-manifold $M$, by the Dehn surgery theorem of Lickorsih and Wallace (\cite{L}, \cite{L1}, \cite{R}), we know that, for some $n$, there is a closed pure $n$-braid $\hat \beta$ which produces $M$ by Dehn surgery on it. By considering the braid $\beta$ in a cylinder $D\times I$, and trivializing it by an isotopy, we can get $\pi_1(M)$ given by an artinian presentation, as explained above.   

We now show that given an Artin presentation, which determines a 3-manifold $M$, then this manifold  can be seen as an open book with planar page.
In general, given a compact, orientable surface $W$ with boundary, we can construct a $3$-manifold described as an open book with page $W$ in the following way \cite {GA}.
Let $h:  W  \rightarrow W $ be a homeomorphism which is the identity on $\partial W$.
In  $W \times [0,1]$ identify  $(x,1) $ to  $(h(x),0)$ for $x\in  W$ and $(x,t)$ to $ (x,0)$ for
$x\in \partial W, t\in [0,1]$. The resulting space is a closed  3-manifold $\tilde M = M(h)$ called an open book with monodromy $h$. If $H : W\times [0,1] \rightarrow M$ is the identification map then $H(\partial W \times \{ 0\}) $ is the binding and $H(W \times \{t\} ), t\in [0,1]$ is a page.

Let $W$ be a disk with $n$ holes, which as before is the disk $D$ with the interior of $n$ disks $V_1,V_2,\dots, V_n$ removed. Consider $W\times I$, and consider it embedded in $D\times I \subset S^3$. As before, let $a_i^0$ be a point in $\partial V_i^0$, for each $i$, let $p_1^0,p_2^0,\dots,p_n^0$ be points cyclically ordered in the upper part of $\partial D \times\{ 0 \}$, and let $B_1^0,B_2^0,\dots,B_n^0$ be paths going from $a_i^0$ to $p_i^0$ for each $i$.
Let $h:W \rightarrow W$ be a homeomorphism which is the identity on $\partial W$. Let $\gamma_i = h(B_i)$. Note that $h$ is completely determined by its image on the arcs $B_i$.  Consider the curves $\gamma_i^*$ on $W\times I$ defined by $\gamma_i ^* = \gamma_i \cup a_i^1\times I \cup (B_i^0)^{-1}\cup p_i^0\times I$. Do an isotopy on $W\times I$ fixing its boundary, except $W\times \{ 1 \}$, getting a framed pure $n$-braid in $D\times I$ (more precisely, the exterior of such a braid in $D\times I$). We identify $W\times \{ 0 \}$ with $W\times \{ 1 \}$, obtaining a space contained in $S^3$. Now the gluing homeomorphism is just the identity, so the punctured disks can be identified, getting a punctured solid torus in $S^3$, i.e., the exterior of a framed pure closed braid in a solid torus. Note that the exterior boundary of the solid tours is fibered by simple closed curves, which are preferred longitudes of the solid torus. Each one of this curves has to be identified to a point. This is equivalent to attaching a solid torus to $T$, such that each of the fibers bounds a meridian disk in the solid torus. But this is done by the complementary solid torus in $S^3$. So after the attaching solid torus, we get the exterior of a closed pure braid. Each of the holes of $W\times I$ is fibered by intervals parallel to $a_i^1\times I$, and after the identification, this is a torus fibered by curves parallel to a curve $a_i'$. Each of these curves is going to be identified to a point, which is equivalent to gluing a solid torus so that each of the fibers bound a meridian disk, i.e. to do then surgery with the given framing. This shows that the manifold $M$ has a open book decomposition with page a planar surface and binding consisting of $n+1$ simple closed curves.

So, we have shown the following results, first proved by Gonzalez-Acu\~na \cite{GA} in 1975.
\vskip 10pt

\begin{theorem}\label{fundamental}  The fundamental group of any orientable,  connected and closed  3-manifold has an Artin presentation. Furthermore, a group $G$ is the fundamental group of an 
orientable, connected and closed 3-manifold if it has an Artin npresentation.
\end{theorem}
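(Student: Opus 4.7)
The plan is to extract both implications directly from the constructions assembled in the preliminary section, and to supply the one ingredient not yet made fully explicit there, namely that every closed orientable 3-manifold has a surgery description by a closed pure $n$-braid.

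For the forward direction I would start with a closed, connected, orientable 3-manifold $M$. By the Lickorish--Wallace surgery theorem (already cited above), $M$ is obtained by integral Dehn surgery on some framed link $L\subset S^{3}$. The first step is to put $L$ into the form of a closed braid: by Alexander's theorem every link is isotopic to a closed braid about some axis, and by adjoining trivially framed meridional components (which do not change the resulting 3-manifold, by a slam-dunk type cancellation) one can arrange the braid to be \emph{pure}, i.e.\ a pure $n$-braid $\beta$ whose closure $\hat\beta$ is surgery equivalent to $L$. Choose an integer framing realising the given surgery coefficients. At this point the machinery of the preliminaries takes over: embedding $\beta$ in $D\times I$ and trivialising it by the ambient isotopy described above produces a homeomorphism $h_{\beta}\colon W\to W$, hence disjoint arcs $\gamma_{1},\ldots,\gamma_{n}$ in $W$, hence an Artin $n$-presentation $\langle x_{1},\ldots,x_{n}\mid r_{1},\ldots,r_{n}\rangle$. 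The paragraph preceding the theorem already established that the Heegaard diagram $(W\times I;\gamma_{1}^{*},\ldots,\gamma_{n}^{*})$ assembled from this data presents the same 3-manifold as Dehn surgery on $\hat\beta$, and that the presentation read off from the intersections of the $\gamma_{i}$ with the meridian arcs $A_{j}\times\{1\}$ is exactly $\pi_{1}(M)$. Hence $\pi_{1}(M)$ admits an Artin $n$-presentation.

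For the converse I would take an abstract Artin $n$-presentation $G=\langle x_{1},\ldots,x_{n}\mid r_{1},\ldots,r_{n}\rangle$ and use the realisation argument sketched in the preliminaries (``Conversely, given an Artin $n$-presentation \dots'') to produce disjoint properly embedded arcs $\gamma_{i}\subset W$ from $p_{i}$ to $\partial V_{i}$ whose associated words are the relators $r_{i}$. Thickening to $W\times I$ and attaching the $\gamma_{i}^{*}$ gives a Heegaard diagram of a closed orientable 3-manifold $M$, and reading $\pi_{1}(M)$ off that diagram by intersection with the meridian disks $A_{i}\times I$ recovers precisely the relators $r_{i}$. Therefore $G\cong\pi_{1}(M)$, as required.

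The main obstacle is the first step above: Lickorish--Wallace produces a framed link, and Alexander's theorem turns it into a closed braid, but in general that braid is not pure. Making the braid pure without changing the surgery 3-manifold (by trivially framed meridional stabilisations, or by a Markov-type modification that kills the underlying permutation) is the only point where care is needed; everything else is a direct transcription of the dictionary between Artin presentations, framed pure braids and Heegaard diagrams that was set up in the preceding pages.
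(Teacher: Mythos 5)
Your overall architecture is the same as the paper's: both directions are read off from the dictionary between framed pure $n$-braids, homeomorphisms of the holed disk $W$, Heegaard diagrams $(W\times I;\gamma_1^*,\dots,\gamma_n^*)$ and surgery on $\hat\beta$ that is set up in Section \ref{section2}, and the paper itself proves the theorem exactly this way, invoking Lickorish--Wallace in the form ``for some $n$ there is a closed pure $n$-braid $\hat\beta$ producing $M$ by surgery.'' The converse direction of your argument (abstract Artin presentation $\to$ disjoint arcs $\gamma_i$ realizing the relators $\to$ Heegaard diagram $\to$ closed orientable $M$ with $\pi_1(M)\cong G$) is a faithful transcription and is fine at the same level of rigor as the paper.

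The genuine gap is in the one step you chose to supply yourself: passing from an arbitrary framed surgery link to a closed \emph{pure} braid. Your mechanism --- Alexander's theorem followed by ``adjoining trivially framed meridional components'' --- cannot work. First, every component of a closed pure $n$-braid meets each disk fiber of the braid-axis fibration exactly once, hence is isotopic to the core of the complementary solid torus and in particular is \emph{unknotted}; so a link with a knotted component (e.g.\ the $+1$-framed left trefoil, whose surgery yields the Poincar\'e sphere) is not isotopic to any closed pure braid, and adding meridians does not change the knot type of the offending component. Second, a meridian of a strand has linking number $0$ with the braid axis, so it is not itself a closed-braid component about that axis; adjoining meridians destroys rather than repairs the closed-braid structure. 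To make the surgery link a closed pure braid one must genuinely change the link while preserving the surgery manifold --- via Kirby/Rolfsen moves, or by rerunning Lickorish's Dehn-twist argument so that the surgery curves are pushed-off twist generators lying in a product neighborhood of a Heegaard surface --- which is precisely what the sources \cite{L}, \cite{L1}, \cite{R}, \cite{GA} cited by the paper provide. Either carry out one of those arguments or cite the fact as the paper does; as written, your forward direction does not go through.
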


\begin{theorem}\label{construction}
Given an Artin $n$-presentation $G = \langle x_1,x_2,\cdots,x_n : r_1,r_2,\cdots,r_n \rangle $, this determines a Heegaard diagram of a 3-manifold $M$, a framed link giving by Dehn surgery $M$ and an open book decomposition of $M$ with planar page.
\end{theorem}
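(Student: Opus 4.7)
The plan is to package together the three constructions sketched in the preliminaries, in the following order: (i) from the Artin presentation to a Heegaard diagram on $W\times I$; (ii) from that Heegaard diagram to a framed pure $n$-braid realizing $M$ by Dehn surgery; (iii) from the resulting data to an open book decomposition of $M$ with planar page. Each step reuses the geometric dictionary already set up in Section \ref{section2}.

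First I would realize the relators geometrically. Given the presentation, I invoke the converse assertion already recalled before Figure \ref{2diagram}: there exist disjoint arcs $\gamma_1,\dots,\gamma_n$ properly embedded in $W$, with $\gamma_i$ running from $p_i\in\partial W_0$ to a point $a_i\in\partial V_i$, such that the word associated to $\gamma_i$ (reading its intersections with the fixed transverse system $A=\cup A_j$) is exactly $r_i$. This is the main geometric input and the step where one has to check that the Artin relation $\prod r_i x_i r_i^{-1}=\prod x_i$ in $F_n$ is precisely what allows the $\gamma_i$'s to be chosen with disjoint interiors; this will be the main obstacle, and I would handle it by induction on the total word length of the $r_i$'s, using the Artin relation to rule out forced crossings. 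Embedding $W\times I$ standardly in $S^3$ and forming the curves
\[
\gamma_i^* = B_i^1 \cup \gamma_i \cup a_i^1\times I \cup (B_i^0)^{-1} \cup p_i^0\times I
\]
on $\partial(W\times I)$, one obtains the Heegaard diagram $(W\times I;\gamma_1^*,\dots,\gamma_n^*)$. Attaching $2$-handles along the $\gamma_i^*$ and capping off with a $3$-ball produces the closed, orientable $3$-manifold $M$, and the standard reading of $\pi_1$ from a Heegaard diagram (using the meridional disks $A_i\times I$ of $W\times I$) recovers exactly the generators $x_i$ and relators $r_i$, so $\pi_1(M)=G$.

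Next I extract the framed pure $n$-braid. Let $h:W\to W$ be the identity on $\partial W$ and defined up to isotopy by $h(B_i)=\gamma_i$. By the correspondence between $\mathrm{Mod}(W,\partial W)$ and framed pure $n$-braids reviewed earlier, $h$ is realized by a framed pure $n$-braid $\beta$ in $D\times I$ whose associated homeomorphism is $h$. Reversing the trivializing isotopy that turned $\beta$ into a product braid transports the curves $\gamma_i^*$ on $\partial(W\times I)$ to curves on the boundary of the exterior of $\beta$ in $D\times I\subset S^3$; these curves are precisely a framing of the closure $\widehat\beta$ (with framing integers determined by $\beta$). Because adding $2$-handles along the trivial curves $\alpha_i=\partial N(V_i^1\cup\gamma_i\cup p_i^1\times I\cup V_i^0)$ before adding the $\gamma_i^*$-handles turns the handlebody into the exterior of $\widehat\beta$ (as argued in the preliminaries), the same manifold $M$ is obtained from $S^3$ by integral Dehn surgery on $\widehat\beta$ with the given framing.

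Finally, to produce the open book I follow the construction after Theorem-free display preceding Theorem \ref{fundamental}. Starting from $W\times I\subset D\times I$ with the arcs $B_i^0$ and their images $\gamma_i=h(B_i)$, I reverse the trivializing isotopy so that, modulo identifying $W\times\{0\}$ with $W\times\{1\}$ by the identity, the quotient sits inside $S^3$ as the exterior of a framed pure closed braid in a solid torus. Capping off the outer longitudes of the solid torus by the complementary solid torus of $S^3$, and capping off each toroidal hole by a solid torus whose meridian hits the framing curve once, yields an identification space that is exactly $M(h)$ and that carries an open book with page $W$ (a planar surface with $n+1$ boundary components) and binding consisting of $n+1$ simple closed curves. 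Combining (i)--(iii) gives all three structures simultaneously, which is the content of the theorem.
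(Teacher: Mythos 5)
Your proposal is correct and follows essentially the same route as the paper: it packages the three constructions of Section \ref{section2} (realizing the relators $r_i$ as disjoint arcs $\gamma_i$ in $W$, forming the Heegaard diagram $(W\times I;\gamma_1^*,\dots,\gamma_n^*)$ and identifying the surgery description on $\widehat\beta$, then building the open book with planar page), which is exactly how the paper justifies the theorem. The only substantive step left open in both treatments is the realization lemma that an Artin $n$-presentation can always be represented by \emph{disjoint} arcs $\gamma_i$ --- the paper simply asserts this (citing Gonz\'alez-Acu\~na), while you at least flag it and sketch an induction on word length, so your write-up is, if anything, slightly more candid about where the real work lies.
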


\begin{definition} An artinian $n$-presentation  $\langle x_1,x_2\cdots,x_n : r_1,r_2,\cdots,r_n \rangle $  is positive if the word  describing $r_i$ in terms of the generators has only positive exponents for all $i = 1,2,3,\cdots, n$.
\end{definition}
\vskip 10pt

\begin{definition} Let $P_n$ be the group of integral framed pure $n$-braids and $\beta \in P_n$. The braid $\beta$ represents a positive artinian $n$-presentation if when isotoping it to a disk with $n$-holes, each chord represents a positive path $r_i$.
\end{definition}
\vskip 10pt

If $\beta$ represents a positive Artin presentation we say that $\hat \beta$, the closeure of $\beta$ describes a positive artinian $n$-presentation.

\vskip10pt

 \begin{definition} Let $L$ be a link of $n$-components in  $S^3$.  $L$ is {\it strongly invertible} if there exists an  involution $\tau$ of $S^3$, whose fixed point set consists of an unknotted circle $Fix(\tau)$, such that $\tau(L)=L$ and $Fix(\tau)$ intersects each component of $L$ in two points.
 \end{definition}

 \section{Main results}\label{mainresults}
\medskip
  
Consider the set $\mathcal{P}$  of pure   $n$-braids  $\beta $'s defined as follows. 
Let $P$ be a set of $n$ chords and   $\{ P_{k_1}, P_{k_2},\cdots , P_{k_l} \}$  a partition of $P$ such that
$|P_{k_j}| = n_j$,  whose elements are consecutive chords. $P_{k_1} $ has the consecutive chords $1,2,\cdots , n_1$, $P_{k_2} $
 has the chords $n_1+1,n_1+2,\cdots, n_1+n_2$ and so on;  and $n_1+ n_2+ \cdots + n_l = n$.
 
 Suppose that associated to $P_{k_j}$ is  $ \Delta_{n_j} ^{e_{s_j}} $ where 
 $\Delta_{n_j} ^{e_{s_j}}  $  consists of  $e_{s_j}$ positive full twists of $n_j$ consecutive chords 
 (we say that a twist along a closed curve is positive if it is counterclockwise), and  $e_{s_j}$ is a positive  integer for all ${s_j}$. 
 Furthermore, we can have unions of $P_{k_j} $'s consisting of consecutive elements of the partition
  in such way that if other block  of unions of $P_{k_j} $'s  contains   some  elements of a previous block then it contains all the previous block.

For example, $P _{k_j + k_{j+1}+k_{j+2}}  = P_{k_j}\cup P_{k_ {j+1}}\cup P_{k_{j+2}}$  
which   is   also    given $e_{s_j + s_{j+1}+s_{j+2}} $  positive  full twists, and we denote it  by  $\Delta _{n_j + n_{j+1}+n_{j+2}} ^{e_{s_j + s_{j+1}+s_{j+2}} } $.
 Take $\beta =  \prod P_{k_j}$ (consisting of elements of the partition and unions of consecutive subsets of the partition).

 We consider also the possibility that a block is given a negative twist but in a restricted manner. Suppose that we have
 $P _{k_j + k_{j+1}}= P_{k_j}\cup P_{k_{j+1}}$, which  is given at least one positive full twist $e_{s_j + s_{j+1}}$. Now take the block $P_{j+1}$ and perform exactly one negative full twist.
 
Now, let us see one example to illustrate the elements of $\mathcal {P}$:

\noindent Let $\beta \in \mathcal{P}$ be an element of $8$ chords and $P= \{ P_{k_1},P_{k_2}, P_{k_3} \} $ where 
$P_{k_1}= \{ 1,2,3\}, P_{k_2}= \{4,5\}$ and $P_{k_3} = \{6,7,8\}$ and also take$P_{k_1}\cup P_{k_2} = \{ 1, 2, 3, 4 ,5 \}, P_{k_1}\cup P_{k_2}\cup P_{k_3} = \{ 1, 2, 3, 4, 5, 6, 7, 8\}$.
Consider
$  \Delta _{n_1+n_2+n_3}^{e_{s_1+s_2+s_3} }, \Delta^{e_{s_1+s_2}}_{n_1+n_2}, \Delta _{n_1}^{e_{s_1}}, \Delta _{n_2}^{e_{s_2}}$ and $\Delta _{n_3}^{e_{s_3}}$. So take
$\beta = \Delta _{n_1+n_2+n_3}^{e_{s_1+s_2+s_3} }\cdot \Delta_{n_1+n_2}^{e_{s_1+s_2}}\cdot \Delta^{e_{s_3}}_{n_3}\cdot \Delta^{e_{s_2}}_{n_2}\cdot  \Delta^{e_{s_1}} _{n_1}$ (see Figure \ref{ex8}). Here we are assuming that all  $e_{s_1+s_2+s_3}, e_{s_1+s_2}, e_{s_1}, e_{s_2}, e_{s_3}$ are $\geq 0$. There is also the possibility that $e_{s_2}=-1$ (or $e_{s_3}=-1$), but in this case we have that $e_{s_1+s_2} \geq 1$ (or $e_{s_1+s_2+s_3} \geq 1$). But $e_{s_1+s_2+s_3}, e_{s_1+s_2}, e_{s_1}$ cannot be negative. 
 
\begin{figure}
\centering
\includegraphics[width=8cm]{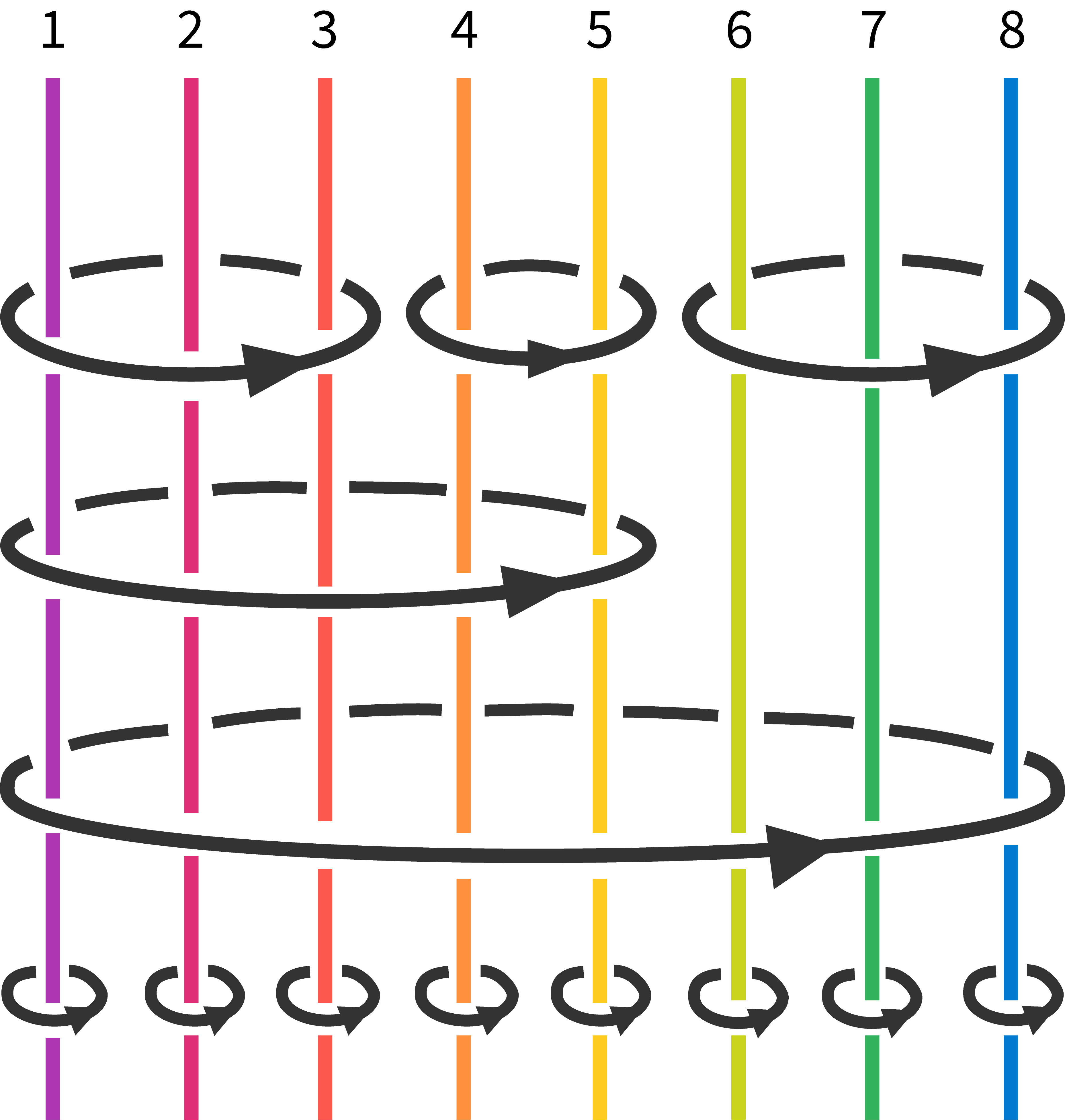} 
\caption{Example with 8 chords.}
\label{ex8}
\end{figure}

\begin{lemma}\label{strongly} Let $\beta \in \mathcal{P}$ be a pure $n$-braid and let $\hat \beta$ its closure. Then, when all chords of the braid are oriented from bottom to top, $\beta$ has a diagram where all crossings are negative. Furthermore $\hat \beta$ is a strongly invertible link. \end{lemma}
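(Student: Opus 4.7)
The plan is to handle both claims via the same decomposition: $\beta$ is a product of full twists $\Delta_A^{e_A}$, one for each subset $A$ appearing in the partition structure of $\mathcal{P}$, and any two such subsets are either nested or disjoint, so the corresponding full twists lie in commuting sub-braid groups of $P_n$.

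For the statement about negative crossings, I would realize one positive full twist $\Delta_A^{1}$ as the continuous counterclockwise $2\pi$-rotation of a round subdisk of $D$ containing the strands in $A$. In the standard projection, this produces a diagram in which every pair of strands in $A$ crosses exactly twice, both times with the same sign; under the authors' orientation convention (\emph{positive} = counterclockwise, strands oriented bottom to top) that sign is negative. Nested twists can be placed in concentric subdisks and sibling (disjoint) twists on separated subdisks, so stacking the $\Delta_A^{e_A}$ yields a diagram of $\beta$ with only negative crossings whenever all exponents are nonnegative. The only extra case in the definition of $\mathcal{P}$ is a block $P_{k_{j+1}}$ with exponent $-1$ inside a parent $P_{k_j}\cup P_{k_{j+1}}$ with exponent $e_{s_j+s_{j+1}}\geq 1$. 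Using the commutativity of nested full twists, one rewrites
\[
\Delta_{n_j+n_{j+1}}^{e_{s_j+s_{j+1}}}\cdot\Delta_{n_{j+1}}^{-1}\;=\;\Delta_{n_j+n_{j+1}}^{e_{s_j+s_{j+1}}-1}\cdot\bigl(\Delta_{n_j+n_{j+1}}^{1}\cdot\Delta_{n_{j+1}}^{-1}\bigr),
\]
and the second factor wraps the $P_{k_j}$-strands once around the $P_{k_{j+1}}$-strands while leaving the $P_{k_{j+1}}$-strands untwisted among themselves. Both factors admit diagrams with only negative crossings, so their product does too.

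For strong invertibility I would pass to the mapping-class realization of $\beta$. Arrange the punctures $y_1,\dots,y_n$ collinearly on a diameter $L$ of $D$, and take each curve $c_A$ bounding the rotated subdisk above to be a round circle centered on $L$; these curves are themselves nested or disjoint. The mapping class of $\beta$ is the commuting product $h_\beta=\prod_A T_{c_A}^{e_A}$ of Dehn twists. The reflection $\iota:D\to D$ across $L$ is an orientation-reversing involution fixing every $y_i$ and preserving every $c_A$, so the standard identity $\iota T_{c_A}\iota^{-1}=T_{c_A}^{-1}$ together with commutativity gives $\iota h_\beta\iota^{-1}=h_\beta^{-1}$. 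Using this symmetry, I would realize $\hat\beta$ inside the braid solid torus $V=D\times S^1\subset S^3$ so that outer twists occupy time intervals symmetric about $\theta=\pi$ and inner twists nest symmetrically inside them; the orientation-preserving involution $\phi(w,\theta)=(\iota(w),-\theta)$ of $V$ then preserves $\hat\beta$ strand by strand.

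Finally I would extend $\phi$ across the complementary solid torus $V'$ (whose core is the braid axis) by the $\pi$-rotation matching $\phi|_{\partial V}$. The resulting involution of $S^3$ is a $\pi$-rotation about an unknotted circle that intersects $V$ in the two arcs $L\times\{0,\pi\}$ and closes up inside $V'$. By the $\iota$-equivariance, the puncture positions at $\theta=0$ and at $\theta=\pi$ both lie on $L$, so this axis meets each closed strand of $\hat\beta$ in exactly two points, and $\hat\beta$ is strongly invertible; the unknottedness of the axis follows from Smith theory for involutions of $S^3$. The main obstacle will be the simultaneous symmetric realization of all the nested and sibling twists so that $\phi$ acts coherently on all of $\beta$, together with verifying that the extension across $V'$ closes the four arc-endpoints into a single unknotted circle; once $h_\beta$ has been written as a commuting product of Dehn twists about $\iota$-symmetric curves, this reduces to a standard equivariant-isotopy argument.
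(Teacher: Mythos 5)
Your argument is correct, and for the strong invertibility it takes a genuinely different route from the paper. For the negative-crossings claim both arguments are essentially the same (each positive full twist of a block contributes only negative crossings when strands run bottom to top, and the laminar structure of the blocks lets you stack them); your explicit rewriting $\Delta_{n_j+n_{j+1}}^{e}\cdot\Delta_{n_{j+1}}^{-1}=\Delta_{n_j+n_{j+1}}^{e-1}\cdot\bigl(\Delta_{n_j+n_{j+1}}\cdot\Delta_{n_{j+1}}^{-1}\bigr)$ is actually a cleaner treatment of the single allowed negative twist than the paper's remark that ``positive crossings cancel with negative crossings produced in a previous twist.'' For strong invertibility, the paper does not build the involution on $\hat\beta$ directly: it realizes each full twist as $1/e$-surgery on an unknotted circle encircling the relevant block, places all these circles nested in a horizontal plane $H$ together with the trivial closed braid $\hat\beta_0$, observes that a line $\bar u\subset H$ meets every component of $\hat\beta_0\cup L$ twice so that this link is strongly invertible by a visible $\pi$-rotation, and then invokes Montesinos' equivariant-surgery result to transport the involution to $\hat\beta$ after surgering $L$. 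Your approach instead constructs the involution $\phi(w,\theta)=(\iota(w),-\theta)$ on the braid solid torus from the identity $\iota T_{c_A}\iota^{-1}=T_{c_A}^{-1}$ and extends it over the complementary solid torus. This is more self-contained (no appeal to \cite{M}) and makes the axis completely explicit, but it pushes all the work into the equivariant realization you defer at the end: you must arrange the commuting twists in time intervals symmetric about $\theta=\pi$ \emph{and} check that the $n$ strand positions at $\theta=\pi$ actually lie on the diameter $L$ (invariance of that $n$-point set under $\iota$ alone would permit points swapped in pairs off $L$), so that the axis meets each component in exactly two points. The paper's surgery route buys exactly this for free, since the symmetry is imposed on the trivial braid and the nested circles, where it is obvious; your route buys an explicit picture of the inverting involution without passing through surgery. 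Both are sound.
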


\begin{proof} When we take a partition of the braid and perform a full positive twist in a block, all resulting crossing points in the diagram will be negative, as seen in Figure \ref{Twist}. When we perform a negative twist some positive crossings will appear but this cancel with negative crossing produced in a previous twist. At the end there are only negative crossings.

Let $\beta_0$ be the trivial pure $n$-braid, so $\beta$ is obtained from $\beta_0$ by performing some twists.
Before performing each twist, we can take an unknotted curve enclosing the chords of the braid involved in the twist. Then the twist can be made by performing $1/e$-Dehn surgery on this curve. All such circles can be positioned in a plane $H$ with no intersections between them, where they are nested, such that $H$ is horizontal with respect to the braid and intersects each chord exactly once. Now consider the closed braid $\hat \beta_0$, it intersects the plane $H$ in $2n$ points, that is, each component intersects $H$ in two points. We can find a line $\bar u$ in $H$, which intersects each component of $\hat \beta_0$ in two points and also it intersects each of the circles in two points, this is possible because the circles are nested. Consider the link $\hat \beta_0 \cup L$, where $L$ is the collection of all circles. The line $\bar u$ intersects each component of the link $\hat \beta_0 \cup L$ in two points, and the link can be positioned so that it remains invariant after performing a $\pi$-rotation around $\bar u$. This shown that $\hat \beta_0 \cup L$ is a strongly invertible link. Now because $\hat \beta$ is obtained by Dehn surgery along the link $L$, it follows from \cite{M} that $\hat \beta$ is also strongly invertible.  
\end{proof}

\begin{figure}
\centering
\includegraphics[width=10cm]{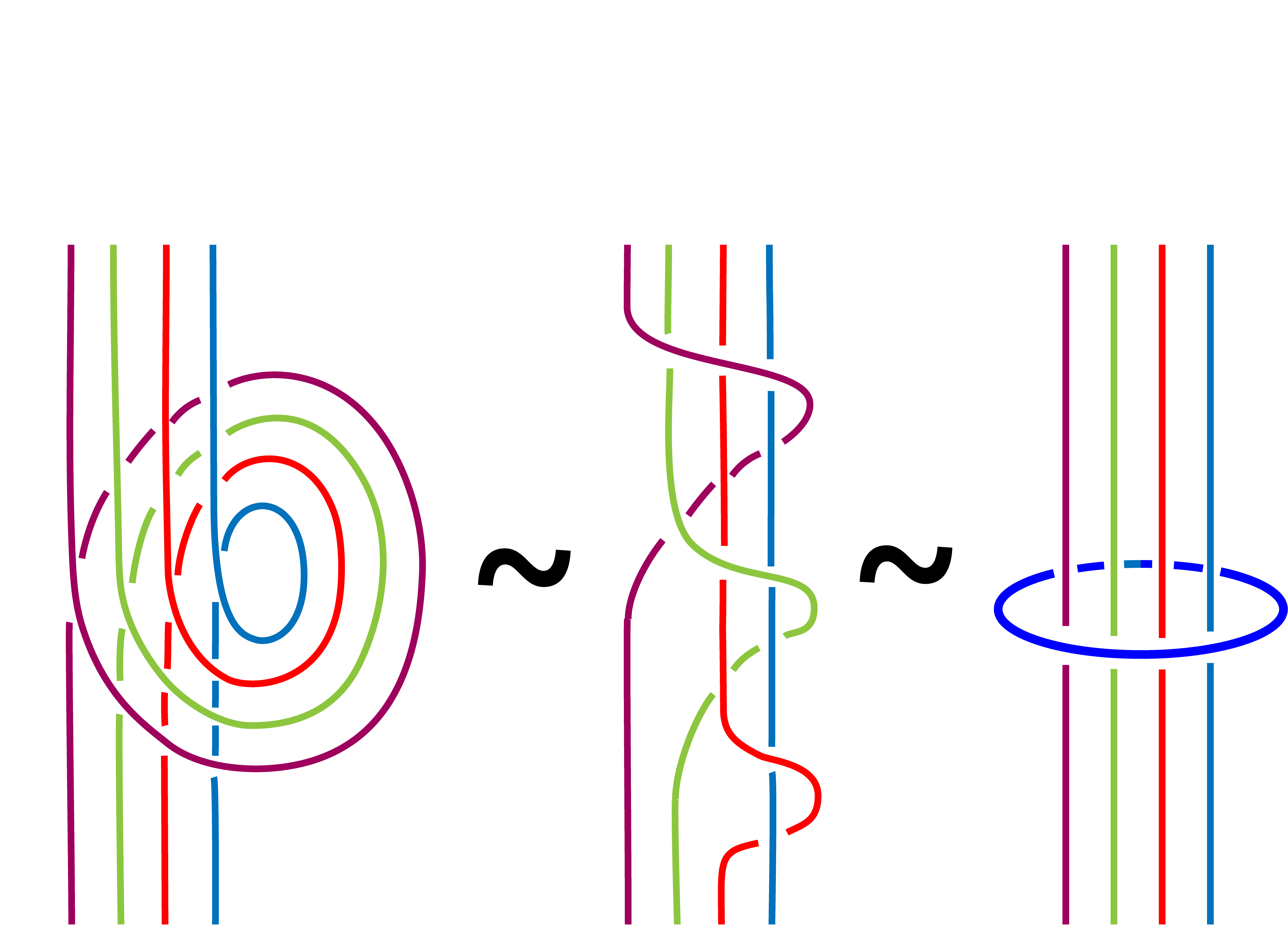}
\caption{The full twist along 4 chords}
\label{Twist}
\end{figure}

 By a graph manifold we mean, as usual, an orientable compact 3-manifold whose $JSJ$-decomposition consists of Seifert fibered manifold. By a degenerate graph manifold we mean a manifold which may contain degenerate Seifert fibered pieces, that is, pieces that may contain index zero exceptional fibers. These pieces are obtained, for example, when doing Dehn filling on a Seifert fibered manifold the slope of the filling coincides with a fiber of the Seifert fibration.
 
 \begin{lemma}\label{surgery} Let $\beta \in \mathcal{P}$ be a pure $n$-braid and let $\hat \beta$ its closure. Let $M$ be a 3-manifold obtained by Dehn surgery on all components of $\hat \beta$. Then $M$ is a (possibly degenerate) graph manifold. \end{lemma}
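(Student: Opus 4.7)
The plan is to realize $M$ as Dehn surgery on an enlarged link all of whose components are unknots arranged in a controlled nested pattern, show that the exterior of this enlarged link is a graph manifold, and conclude by observing that Dehn filling a graph manifold yields a graph manifold, up to the appearance of index-zero exceptional fibers when a filling slope happens to coincide with a regular-fiber slope of the adjacent Seifert piece.

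First, following the construction in the proof of Lemma \ref{strongly}, I would express $\hat\beta$ as the image of the trivial closed pure braid $\hat\beta_0$ under $1/e_{s_j}$-Dehn surgery on the nested family $L = L_1 \cup \cdots \cup L_k$ of unknotted circles that lie in a common horizontal disk $H$ inside the solid torus $V = D \times S^1$ carrying $\hat\beta_0$. Consequently, Dehn surgery on $\hat\beta$ with the given integral framings is equivalent to Dehn surgery on the link $L' := \hat\beta_0 \cup L \subset S^3$ with appropriate rational slopes: the pushed-forward framings on the components of $\hat\beta_0$ together with the slopes $1/e_{s_j}$ on the $L_j$. Thus it suffices to show that surgery on $L'$ yields a (possibly degenerate) graph manifold.

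Next I would show that the exterior $E(L') = S^3 \setminus \operatorname{int} N(L')$ is a graph manifold. The geometric picture is explicit: the components of $\hat\beta_0$ are parallel copies of the core of $V$, and the $L_j$ are nested coplanar circles in the meridian disk $H$ encircling various subsets of the punctures $\hat\beta_0 \cap H$; each pair $(L_j,C_i)$ with $C_i \subset \hat\beta_0$ enclosed by $L_j$ forms a Hopf link, while two $L_j$'s are coplanar and hence unlinked from each other. I would decompose $E(L')$ along the family of essential tori dual to the nesting, specifically tori obtained by doubling annuli in $H$ that separate one nesting level from the next together with the boundary tori of small neighborhoods of the $L_j$. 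Each complementary piece is homeomorphic to (planar surface) $\times S^1$, fibered by circles parallel to the core of $V$; the punctures of the planar base correspond on one hand to strands of $\hat\beta_0$ and on the other to the adjacent nesting levels. Incompressibility of these tori follows from the fact that every $L_j$ encircles at least one component of $\hat\beta_0$, so no compressing disk can exist.

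Finally, $M$ is obtained from $E(L')$ by Dehn filling its boundary tori along the prescribed slopes. Dehn filling a Seifert piece along a slope distinct from the regular fiber slope produces another Seifert fibered space (possibly with one additional exceptional fiber), while filling along the regular fiber slope produces a piece with an index-zero exceptional fiber in the sense of the paper; in either case the resulting manifold remains a graph manifold, possibly degenerate. The main obstacle lies in step two, the careful construction and verification of the toroidal decomposition of $E(L')$: one must check that the proposed dual tori are pairwise non-isotopic and incompressible, and that the $S^1$-fibrations on adjacent pieces match up coherently across the cuts. This is straightforward for the innermost and outermost nesting levels, but requires some bookkeeping for intermediate levels, especially when both positive and the single permitted negative twist appear.
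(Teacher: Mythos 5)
Your overall strategy coincides with the paper's: pass to the auxiliary link $\hat\beta_0\cup L$ of Lemma \ref{strongly}, cut along tori associated to the nested circles, recognize pieces of the form $(\text{planar surface})\times S^1$, and note that Dehn filling preserves the (possibly degenerate) graph structure. But step two, which you yourself flag as the main obstacle, has two concrete problems. First, the tori you propose to cut along --- doubles of annuli lying in $H$ --- are boundaries of $I$-bundle neighborhoods $A\times[-\epsilon,\epsilon]$ of annuli disjoint from the link; each such torus bounds a solid torus missing the link entirely, so it is compressible and separates nothing in the link exterior. The tori that actually do the work are the \emph{vertical} tori $\mu\times S^1$ swept out by the nested circles $\mu\subset D$ under the $S^1$-direction of the solid torus $V=D\times S^1$ (equivalently $\partial R\times S^1$ for the planar regions $R$ between consecutive nesting levels); these are the tori the paper uses.

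Second, your claim that every complementary piece of the exterior of $\hat\beta_0\cup L$ is fibered by circles parallel to the core of $V$ fails exactly at the pieces containing a drilled-out $L_j$: each $L_j$ lies in a meridian disk of $V$ and is transverse to that fibration, so no Seifert fibration of a piece having $\partial N(L_j)$ in its boundary can be longitudinal there. Concretely, the region between two parallel copies of $\mu_j\times S^1$ with $N(L_j)$ removed is $S^1\times(\text{pair of pants})$, but its (unique) Seifert fibration is meridional, i.e.\ by circles parallel to $L_j$, and it does not match the longitudinal fibration of the adjacent pieces; the graph-manifold conclusion survives, but the bookkeeping you defer --- matching fibrations is not needed, checking that the $1/e$-filling slope on $\partial N(L_j)$ differs from the fiber slope is --- is precisely where the argument lives. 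The paper sidesteps all of this by never drilling out $L$: since $L_j$ lies \emph{on} the vertical torus $\mu_j\times S^1$ with surface framing equal to its $0$-framing, $1/e_{s_j}$-surgery on $L_j$ amounts to cutting the exterior of $\hat\beta_0$ along that torus and regluing by $e_{s_j}$ Dehn twists, so the pieces $R\times S^1$ and their longitudinal fibrations are untouched, the components of $\hat\beta$ remain fibers, and only the final fillings along $\hat\beta$ can introduce degenerate fibers. You should either adopt that shortcut or carry out the meridional analysis of the $L_j$-pieces explicitly.
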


 \begin{proof} Let $\hat \beta_0 \cup L$ be the link defined in the proof of Lemma \ref{strongly}. Note that $\hat \beta_0 \cup L$ is contained in a solid torus $D\times S^1$, where $D$ is a disk contained in the plane $H$. For each component $\mu$ of $L$, $\mu \times S^1$ is a torus embedded in $D\times S^1$. These tori decompose the exterior of $\beta_0$ in Seifert fibered pieces; in fact each of the pieces is of the form $R\times S^1$, where $R$ is a planar surface contained in $H$,
 which may contain components of the braid, which will be of the form $*\times S^1$, where $*$ is a point in $D$, i.e. they are fibers in the Seifert fibration. The effect of doing Dehn surgery on the components of $L$ is just that of regluing the pieces by means of some homeomorphism, and then the exterior of $\beta$ is also a graph manifold. Now, because each component of $\beta$ is a fiber in one of the Seifert fibered pieces, by doing Dehn surgery on the components of $\hat \beta$, we still have a Seifert fibered piece, where some degenerate fiber may appear, depending on the surgery coefficients.
 \end{proof}

\begin{theorem}\label{main}
The set $\mathcal{P}$ of pure $n$-braids  $\beta$'s, contains all the pure $n$-braids, such that $\hat \beta $  admits a positive Artin presentation  (depending on the surgery coefficients).
 \end{theorem}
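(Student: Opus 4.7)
The plan is to start from the geometric data that a positive Artin presentation gives on the disk with $n$ holes and read off the nested-twist structure that defines $\mathcal{P}$. Let $\beta$ be a pure $n$-braid whose closure admits a positive Artin $n$-presentation. By definition this means that after a suitable isotopy $\beta$ can be realized on $W$ by a system of disjoint arcs $\gamma_1,\dots,\gamma_n$, with $\gamma_i$ going from $p_i$ to a point of $\partial V_i$ (purity), and with each $\gamma_i$ meeting every cut arc $A_j$ only from left to right.

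First I would analyze the shape of a single positive path. Pick $\gamma_i$ and push it into a minimal-intersection position with the system $A=\cup A_j$. Positivity says $\gamma_i\cap A_j$ occurs only in the positive direction, so after cutting $W$ along $A$ (which yields an unpunctured disk), the arc $\gamma_i$ becomes a collection of sub-arcs lying in a disk with their endpoints sitting on the two sides of the $A_j$'s. A straightforward combinatorial argument (or an outermost-arc reduction inside that disk) shows that, up to isotopy fixing $\partial W$, $\gamma_i$ is a monotone spiral winding positively around a well-defined subset $S_i\subset\{1,\dots,n\}$ of the holes, some integer number $\ell_i\geq 0$ of complete times, and then terminating on $\partial V_i$. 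The word $r_i$ thus has a standard form corresponding to $\ell_i$ full positive windings around the holes in $S_i$, followed by a final segment reaching $V_i$, so in particular $i\in S_i$.

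Next I would exploit disjointness. Because $\gamma_i$ and $\gamma_j$ cannot cross, their spiral supports $S_i$ and $S_j$ must be either disjoint, or one contained in the other; that is, the family $\{S_1,\dots,S_n\}$ is laminar. Moreover the base points $p_i$ are cyclically ordered on $\partial W_0$ and the holes $V_i$ are ordered consistently with them, and any positive spiral starting at $p_i$ encloses an interval of holes that is consecutive in this ordering (otherwise one would have to enter and leave the spiral on the same side, forcing negative intersections with some $A_k$). Hence each $S_i$ is a consecutive block of indices containing $i$, and the collection $\{S_1,\dots,S_n\}$ is a laminar family of consecutive blocks. This is exactly the combinatorial data that defines a partition $\{P_{k_1},\dots,P_{k_l}\}$ together with the admissible nested unions of consecutive blocks occurring in the definition of $\mathcal{P}$.

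Finally I would translate the spiral data into the twist description. For each maximal block $S=S_i$ appearing in the laminar family, the corresponding multiplicity $e_S$ is the number of times the positive spiral around $S$ is traversed, and the braid obtained by pushing the collection $\gamma_1,\dots,\gamma_n$ back to $D\times I$ is precisely the product $\prod_{S}\Delta_{|S|}^{e_S}$, ordered from outermost to innermost block, which is the general element of $\mathcal{P}$. The small extra flexibility allowed in $\mathcal{P}$ (a single negative twist on an inner block, compensated by a positive twist on a block properly containing it) corresponds to the case where the spiral of an inner block can be drawn negatively provided the outer spiral is traversed at least once positively, so that the resulting paths are still positive after isotopy; this case must be treated separately at the end. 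The main obstacle I expect is the spiral normal form in the first paragraph: showing rigorously that positivity plus minimal intersection with $A$ forces a path $\gamma_i$ to be, up to isotopy relative to $\partial W$, a spiral around a consecutive block of holes. Once this normal form is established, the laminarity and the conversion to full twists are formal consequences, and the inclusion $\beta\in\mathcal{P}$ follows.
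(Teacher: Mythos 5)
Your overall strategy is the same as the paper's: realize the positive presentation as disjoint positive paths on the disk with holes, extract a nested family of consecutive blocks from disjointness and positivity, and convert that data into a product of full twists. The paper carries this out by a recursive case analysis ($n=2$, $n=3$, then the general inductive step), whereas you propose to package the combinatorics into a ``spiral normal form'' plus a laminarity argument. However, there are two genuine gaps.

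First, your normal form is mis-stated. You claim each positive path $\gamma_i$ is, up to isotopy, a \emph{single} spiral winding $\ell_i$ times around one block $S_i$ and then terminating on $\partial V_i$. This is not the general shape: in the paper's analysis a single path typically winds around a \emph{decreasing chain} of nested consecutive blocks with independent multiplicities, e.g.\ $r_1=(x_1x_2x_3)^{e}(x_1x_2)^{e_1}x_1^{m_1-e-e_1}$, where $\gamma_1$ spirals $e$ times around $\{V_1,V_2,V_3\}$, then $e_1$ times around $\{V_1,V_2\}$, then around $V_1$ alone. A single support set $S_i$ with one multiplicity cannot record this, so the laminar family $\{S_i\}$ you build does not determine the braid. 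The correct statement, which the paper's recursion establishes, is that each $\gamma_i$ is a concatenation of positive spirals around a nested decreasing chain of consecutive blocks, and that all paths indexed by a given block traverse that block's spiral the same number of times (this last compatibility, forced by disjointness, is what turns the spirals into full twists $\Delta^e$; you do not address it). You also omit the paper's key observation that drives the consecutiveness: if the first path of a block skips a hole of that block, the path assigned to that hole is forced to cross it, violating positivity.

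Second, you explicitly defer the proof of the normal form (``the main obstacle I expect is the spiral normal form''), but this is precisely the content of the theorem; the laminarity and twist-conversion steps you call formal consequences are comparatively easy. An outermost-arc argument in the disk obtained by cutting along $A$ can indeed be made to work, but it must be set up to produce the nested-chain normal form above, not a single spiral, and it must simultaneously control all $n$ paths (to get the shared winding numbers), which is where the paper resorts to its block-by-block recursion. As written, the proposal asserts the conclusion of the hard step rather than proving it, and asserts it in a form that is too weak to recover the elements of $\mathcal{P}$.
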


 \begin{proof}
Consider a disk $D$ with $n$ holes $V_1,\,V_2,\,\ldots,\,V_n$. As before, let $P$ be a set of $n$ disjoint paths $\gamma_{1}$, $\gamma_{2}$, $\cdots\,\gamma_{n}$ in  $W=D-\cup_{i=1}^n V_i$, each starting in a point $p_i$ in $\partial W_0$ and finishing in a point $a_i$ in $\partial V_i$. Suppose that $\{ P_{k_1}, P_{k_2},\cdots , P_{k_l} \}$ is a partition of $P$ such that $|P_{k_j}| = n_j$,  whose elements are consecutive paths. So $P_{k_1} $ has the consecutive paths $1,2,\cdots , n_1$, $P_{k_2} $ has the paths $n_1+1,n_1+2,\cdots, n_1+n_2$ and so on;  and $n_1+ n_2+ \cdots + n_l = n$. We are assuming that a path in $P_{k_i}$ determines a word in the symbols $x_{y+1},x_{y+2}, \cdots, x_{y+n_i}$, where $y=n_1 +n_2 + \cdots + n_{i-1}$. Note that any set of paths $P$ can be partitioned in such a manner, which includes the possibility that $P$ is just the same as $P_{k_1}$. Since $\beta$ is describing  a positive artinian diagram then if $\gamma_{k}$ intersects the generator $x_j$, then it must intersect it from left to right direction. Note that $\gamma_1$ intersects first the generator $x_1$, because if intersects first some other $x_i$, then it will no be possible for $\gamma_1$ to finish in $V_1$ without introducing negative intersections. We will describe all the possible diagrams  by induction on $n$.

\vskip 10pt
Suppose that $n=2$. Then we have a disk with 2 holes $V_1$ and $V_2$, and $P$ consists of two paths $\gamma_1$ and $\gamma_2$. Then there are two possible partitions for $P$.

\begin{figure}
\centering
\includegraphics[width=10cm]{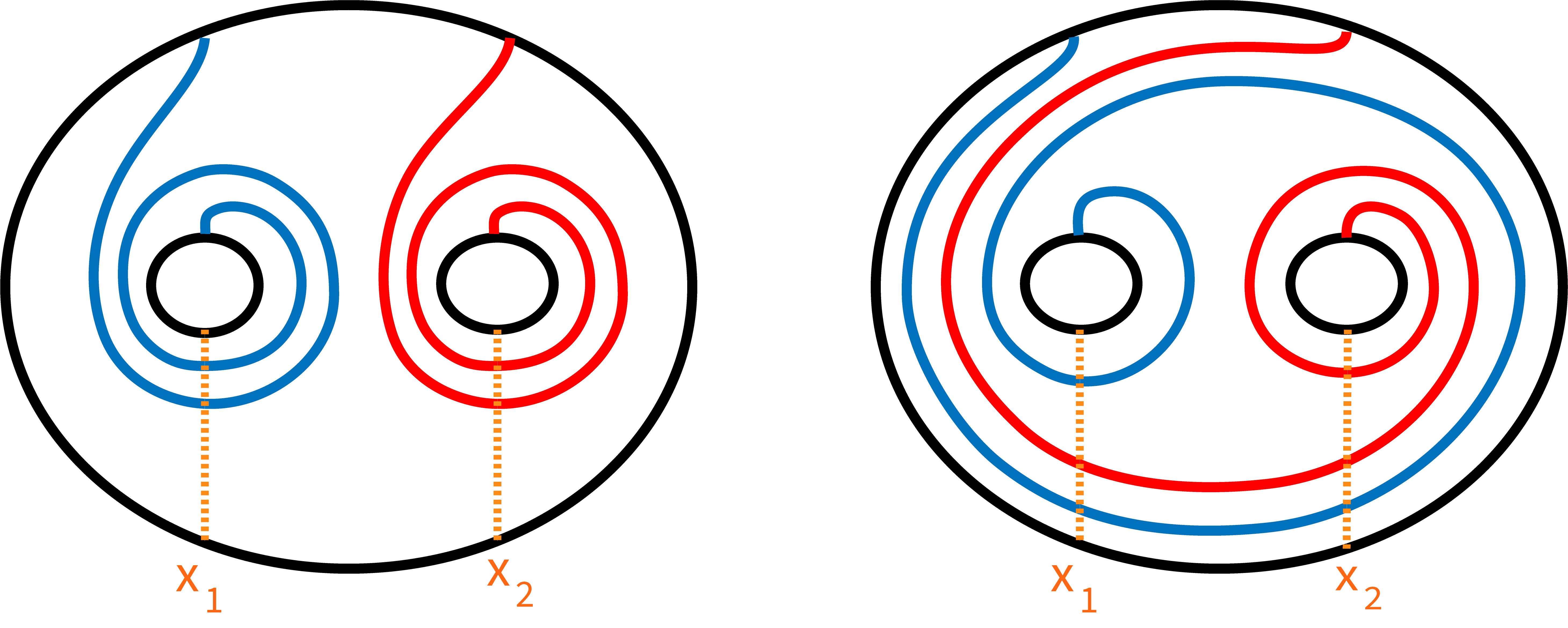} 
\caption{Diagrams of 2 holes.}
\label{2diagram}
\end{figure}

\begin{enumerate}
    \item The partition consists of the subsets $P_1=\{\gamma_1\}$ and $P_2=\{\gamma_2\}$. In this case $\gamma_1$ intersects only $x_1$, so $\gamma_1$ goes $m_1$ times around the hole $V_1$. In the same way, $\gamma_2$ intersects only $x_2$, hence $\gamma_2$ goes $m_2$ times around $V_2$, where $m_1,\,m_2\in\mathbb{N}$ (see Figure \ref{2diagram}).
    \item The partition of $P$ consists only of the set $P$.  In this case both $\gamma_1$ and $\gamma_2$ go $m$ times around both holes. Then $\gamma_1$ makes $m_1$ turns around the hole $V_1$ and $\gamma_2$ makes $m_2$ turns around the hole $V_2$, where $m,\,m_1,\,m_2\in\mathbb{N}$ (see Figure  \ref{2diagram})
\end{enumerate}
\vskip 10pt
We will also describe the case $n=3$. In this case, we have four possible partitions for $P$. We would like to point out, as we will see, that this is a recursive process.

\begin{figure}
\centering
\includegraphics[width=10cm]{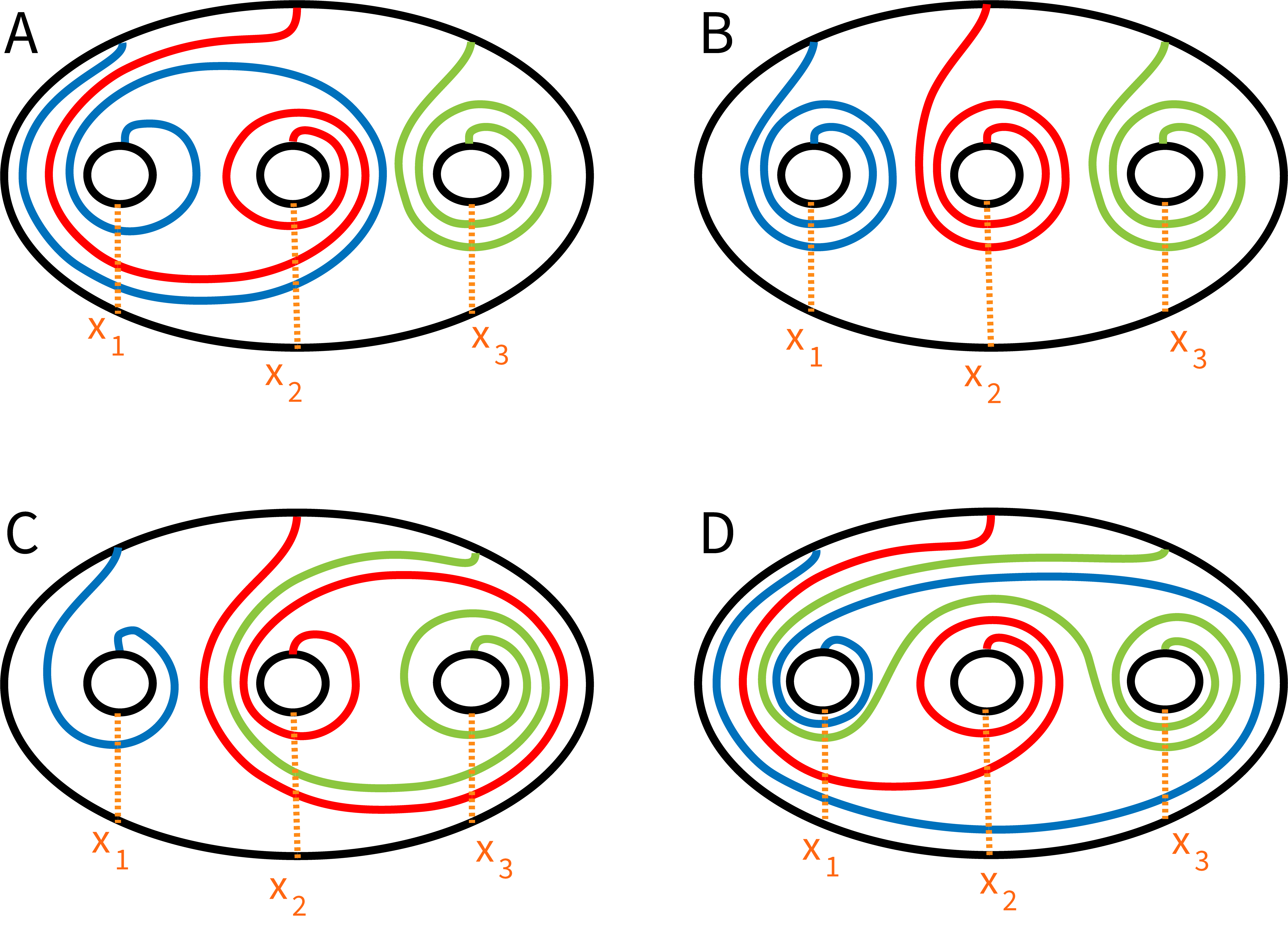} 
\caption{Diagrams of 3 holes.}
\label{3diagram}
\end{figure}

\begin{enumerate}
    \item The partition consists of the subsets $P_1=\{\gamma_1\}$, $P_2=\{\gamma_2\}$ and $P_3=\{\gamma_3\}$. Then, $\gamma_i$ ($i=1,2,3$), intersects only $x_i$ and goes $m_i$ times around the hole $V_i$, where $m_1,\,m_2,\,m_3\in\mathbb{N}$. See Figure \ref{3diagram} B.
    \item The partition consists of the subsets $P_1=\{\gamma_1\}$, 
    $P_{2,3}=\{\gamma_2,\,\gamma_3\}$. Then $\gamma_1$ goes $m_1$ times around $V_1$ and both $\gamma_2$ and $\gamma_3$ skip the hole $V_1$. Now we have two possibilities for $P_{2,3}=\{\gamma_2,\,\gamma_3\}$ described in the case $n=2$; \emph{i.e.}
    \begin{enumerate}
        \item There is a subpartition of it consisting of the subsets $P_2=\{\gamma_2\}$ and $P_3=\{\gamma_3\}$. In this case $\gamma_2$ goes $m_2$ times around the hole $V_2$. In the same way, $\gamma_3$ skips the hole $V_2$ and goes $m_3$ times around  $V_3$, where $m_1,\,m_2,\,m_3\in\mathbb{N}$. This is equivalent to Case 1.
    \item The set $P_{2,3}$.  In this case both $\gamma_2$ and $\gamma_3$ turn around both holes $k_2$ times. Then $\gamma_2$ goes $m_2$ times around the hole $V_2$ and $\gamma_3$ goes $m_3$ times around the hole $V_3$, where $m,\,m_1,\,m_2\in\mathbb{N}$ (see Figure \ref{3diagram} C).
    \end{enumerate}
    \item The partition consists of the subsets $P_{1,2}=\{\gamma_1,\,\gamma_2\}$, 
    $P_{3}=\{\gamma_3\}$. This case is analogous to the previous one, then $\gamma_3$ turns $m_3$ times around $V_3$ and both $\gamma_1$ and $\gamma_2$ avoid the hole $V_3$. Again, we have two possibilities for $P_{1,2}=\{\gamma_1,\,\gamma_2\}$ described in the case $n=2$; i.e.
    \begin{enumerate}
        \item There is a subpartition of it consisting of the subsets $P_1=\{\gamma_1\}$ and $P_2=\{\gamma_2\}$. In this case, $\gamma_1$ turns $m_1$ times around the hole $V_1$. In the same way, $\gamma_2$ avoids the hole $V_1$ and goes $m_2$ times around  $V_2$, where $m_1,\,m_2,\,m_3\in\mathbb{N}$. This is equivalent to Case 1.
    \item The set $P_{1,2}$.  In this case both $\gamma_1$ and $\gamma_2$ go around both holes $m$ times. Then $\gamma_1$  goes $m_1$ times around $V_1$ and $\gamma_2$ turns $m_2$ times around the hole $V_2$, where $m,\,m_1,\,m_2\in\mathbb{N}$ (see Figure \ref{3diagram} A).
    \end{enumerate}
    \item The partition consists only of the set $P$. Thus, the paths $\gamma_1$, $\gamma_2$ and $\gamma_3$ go all $k_3$ times around  the three holes, where $k_3\in\mathbb{N}$, i.e. the word $(x_1x_2x_3)^{k_3}$ appears in each of the chords. 
    There are several cases.
    
    \begin{enumerate} 
    \item First, $\gamma_1$ goes once around the holes $V_1$, $V_2$ and $V_3$ and finally turns $m_1$ times around the hole $V_1$. For the chords $\gamma_2$ and $\gamma_3$, we have again two possibilities as in the case $n=2$.
        \begin{enumerate}
        \item The paths $\gamma_2$ and $\gamma_3$ turn $k_2$ times around the holes $V_2$ and $V_3$. Then 
        $\gamma_2$  goes $m_2$ times around $V_2$  and $\gamma_3$ turns $m_3$ times around the hole $V_3$, where $k_2,\,m_1,\,m_2\in\mathbb{N}$ (This is like Figure \ref{3diagram} B with some full twists).
        \item The path $\gamma_2$ makes $m_2$ turns around the hole $V_2$. In the same way, $\gamma_3$ avoids the hole $V_2$ and goes $m_3$ times around  $V_3$, where $m_2,\,m_3\in\mathbb{N}$ (see Figure \ref{3diagram} D).
        \end{enumerate}
        \item Second, $\gamma_1$ goes once around the holes $V_1$, $V_2$ and $V_3$, then goes $k_2$ times around the holes $V_1$ and $V_2$, and finally turns $m_1$ times around the hole $V_1$. The chord $\gamma_2$ goes also goes $k_2$ times around $V_1$ and $V_2$ and then goes $m_2$ times around $V_2$. Finally, $\gamma_3$ goes $m_3$ times around $V_3$, where $k_2, m_1,m_2,\,m_3\in\mathbb{N}$. We have something like in Figure \ref{3diagram} C, but with full twists.
    \end{enumerate}

\end{enumerate}

Continuing inductively, we consider the set $P$ of $n$ paths,  where  $\{ P_{k_1}, P_{k_2},\cdots , P_{k_l} \}$ is a partition of $P$ such that $|P_{k_j}| = n_j$,  whose elements are consecutive paths. We will start our analysis, considering each subset $P_{k_j}$. As we mentioned above, $P_{k_j}$ is a subset of $n_j$ consecutive paths
$\gamma^{k_j}_{1},\,\gamma^{k_j}_{2},\,\ldots,\,\gamma^{k_j}_{n_j}$ in $W = D-\cup_{i=1}^n V_i$, starting in a point $p_i^{k_j}$ in $\partial W_0$ and finishing in a point in $V_i^{k_j}$. Consider the corresponding set of holes $H_{k_j}=\{V^{k_j}_{1},\,V^{k_j}_{2},\,\ldots,\,V^{k_j}_{n_j}\}$. Notice that if $\gamma^{k_j}_{1}$ skips the hole $V^{k_j}_s$, then the chord $\gamma^{k_j}_{s}$ must intersect the curve described by $\gamma^{k_j}_{1}$, contradicting that $\beta$ describes a positive Artin diagram. Hence, we can assume that $\gamma^{k_j}_{1}$ intersects each generator $x^{k_j}_i$ for $2\leq i\leq n_j$, otherwise we have a new partition for our set $P_{k_j}$. Summarizing $\gamma^{k_j}_{1}$ goes $h^{k_j}$ times around all the holes of $H_{k_j}$ and suppose that it goes $m^{k_j}_1$ times around the hole $V^{k_j}_{1}$, where $h^{k_j},\,m^{k_j}_1\in\mathbb{N}$. The case when $\gamma^{k_j}_{1}$ goes around a subset of the holes before going around $V_{k_j}$ can be done similarly.\\

\begin{figure}
\centering
\includegraphics[width=10cm]{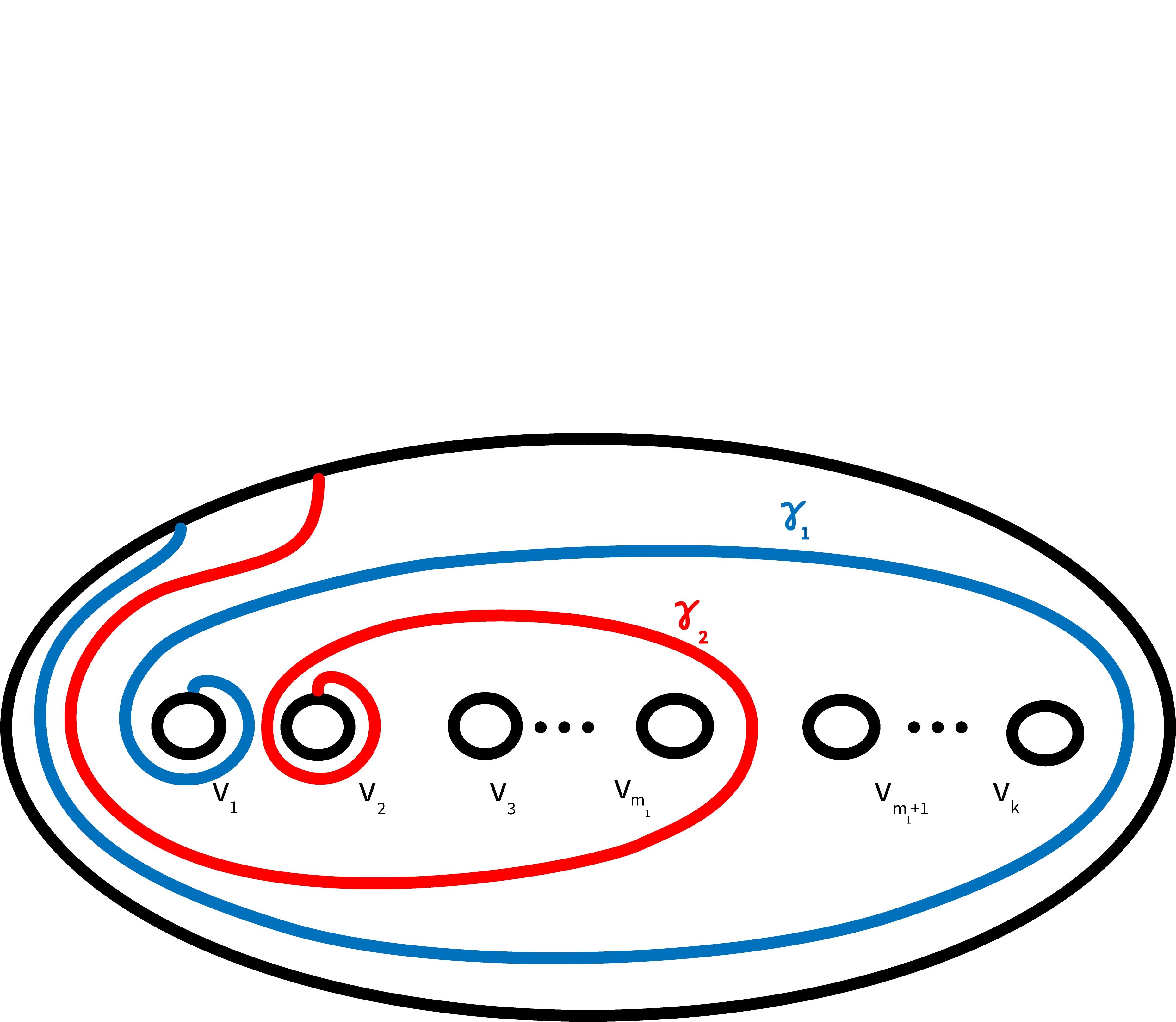} 
\caption{Diagram of $n$ holes.}
\label{ndiagram}
\end{figure}

\begin{figure}
\centering
\includegraphics[width=10cm]{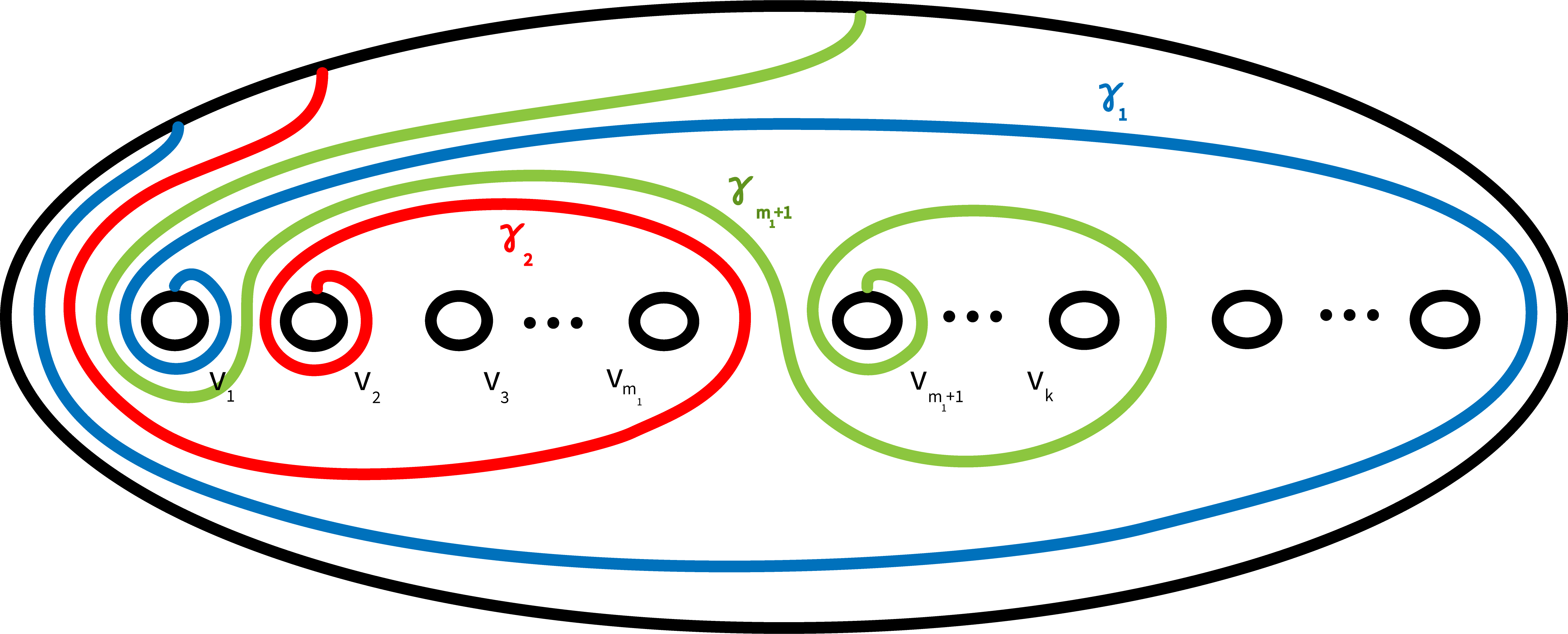} 
\caption{3 chords in a diagram of $n$ holes.}
\label{ndiagrams}
\end{figure}

Next, consider $\gamma^{k_j}_{2}$. First it goes around all the holes of $H_{k_j}$, $h^{k_j}-1$ times. Then again $\gamma^{k_j}_{2}$ goes around of a subset 
$H^{k_j}_2$ of holes. Observe that, by the same previous argument, $H^{k_j}_2$ consists of consecutive holes, $V^{k_j}_1,\,V^{k_j}_2,\,\ldots,\,V^{k_j}_{s_2}$. Thus, we have two possibilities.
\begin{enumerate}
    \item If $H^{k_j}_2$ is a proper subset of $H_{k_j}$, we have a subpartition of $H_{k_j}$ or equivalently a subpartition of $P_{k_j}$. In this case, $\gamma^{k_j}_{2}$ goes around all the holes of $H_{k_j}$, $h^{k_j}$ times, then turns $h^{k_j}_2$ times around all the holes of 
    $H^{k_j}_2$ and finally goes $m^{k_j}_2$ times around the hole $V^{k_j}_{2}$, where $h^{k_j}_2,\,m^{k_j}_2\in\mathbb{N}$.
    Now, we consider the path $\gamma^{k_j}_{3}$, and again, $\gamma^{k_j}_{3}$ can go around all the holes of $H^{k_j}_2$ or a proper subset of it. We continue this analysis in a recursive way, and since $H^{k_j}_2$ is finite, we finish at some finite time. 
    
    \item If $\gamma^{k_j}_{2}$ goes around all the holes of $H_{k_j}$, $h^{k_j}$ times, then goes $m^{k_j}_2$ times around the hole $V^{k_j}_{2}$.
\end{enumerate}

Notice that the path $\gamma^{k_j}_{s_2+1}$ must skip the subset 
    $H^{k_j}_2-\{V^{k_j}_1\}$ of holes and goes around of a subset 
    $H^{k_j}_3=\{V^{k_j}_{s_2+1},\,V^{k_j}_{s_2+2},\,\ldots,\,V^{k_j}_{s_3}\}$ of holes; otherwise $V^{k_j}_{s_2+1}$ will not be reached by $\gamma^{k_j}_{s_2+1}$. Next, we repeat the previous analysis and again, since the cardinality of $H^{k_j}_3$ is finite, we finish at some finite time.
 We continue describing each chord of $P_{k_j}$ $j=1,\ldots,l$ until we describe the last chord $\gamma_n$ of $P_{k_l}$.

We have described all diagrams giving a positive Artin presentation. Now we describe the pure $n$-braids determined by these presentations. Given a set of paths $\gamma_i$ in a disk with holes $W$, we proceed as in Section \ref{section2}. Consider $D\times I$, and the trivial $n$-braid $\beta_0$ on it. Suppose that $W$ is identified with $D\times \{ 1\}$. Take the union of the braid $\beta_0$ and the paths $\gamma_i$, which produce a braid whose endpoints lie, in one side, in the interior of $D \times \{ 0 \}$, and in the other side in $\partial_0 W$. To recover the desired braid, we just have to untwist the paths in $W$ as in Section \ref{section2}, but this can also be obtained by just pushing the arcs of $\beta$ into the interior of $D\times I$, keeping its endpoints fixed.

Suppose first that $n=2$. It follows from Figure \ref{2diagram} that the braid $\beta$ is obtained by performing full positive twists around the two chords of the braid and then twists on each component to adjust the framing. Suppose now that $n=3$. It follows from Figure \ref{3diagram} that the braid is obtained by performing full positive twists around the three chords of the braid, and then full positive twists around the first two chords, or well, full positive twist along the chords 2 and 3. The case of Figure \ref{3diagram} D is somewhat special. In this case the braid is obtained by performing at least one positive twist along the three chords of the braid and then a negative twist along the components 2 and 3 of the braid.

Consider now the general case of an $n$-artinian presentation. Take the braid $\beta$ as before, consisting of the trivial braid union paths in $W$. By pushing it into the interior of $D\times I$ in an orderly manner, by pushing first the outer parts, we see that we have full positive twists along chords in a block of chords. In the cases where a chord skips some of the holes before arriving at its respective hole, as in Figures \ref{ndiagram}, \ref{ndiagrams}, this can be obtained by first making a positive full twist in a block and then a negative full twist along a subblock of it. Continuing in this manner we get a description of the braid $\beta$ as an element of $\mathcal{P}$.
\end{proof}

Let $\beta \in \mathcal{P}$ be an unframed braid. By the construction of the previous result, it is cleat that $\beta$ can be represented by a positive Artin presentation. However, if $\beta$ is framed, the framing may introduce many negative twists around a hole, producing that a word in the Artin presentation terminates with a sequence of several $x_i^{-1}$.

 \begin{theorem}\label{artinstrongly} Let $\beta \in P_n$ be a  pure $n$-braid and let $\hat \beta$ be its closure. If $\beta $ represents a positive artinian  $n$-presentation, then $\hat \beta$ is strongly invertible. 
 \end{theorem}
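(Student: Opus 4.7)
The plan is to obtain this theorem as an immediate corollary of the two main technical results already proved in the section. The engine is Theorem \ref{main}, which guarantees that every pure $n$-braid $\beta$ whose closure admits a positive Artin presentation actually lies in the combinatorially defined family $\mathcal{P}$, and Lemma \ref{strongly}, which establishes strong invertibility for every closure $\hat\beta$ with $\beta\in\mathcal{P}$.

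First I would spell out the reduction. Assume $\beta\in P_n$ produces a positive Artin presentation. By Theorem \ref{main}, $\beta$ is (up to framing) one of the braids in $\mathcal{P}$, built from a nested family of full positive twists on consecutive sub-blocks of chords, possibly adjusted by a single negative twist on an inner sub-block whose enveloping block has received at least one positive twist. Since strong invertibility is a property of the underlying link and is independent of the integral framing, we may throw away the framing data and apply Lemma \ref{strongly} directly to this element of $\mathcal{P}$, obtaining that $\hat\beta$ is strongly invertible in $S^3$.

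The one point that deserves a short explicit remark is that the symmetry axis produced in Lemma \ref{strongly} does meet every component of $\hat\beta$ in exactly two points, as required by the definition of strong invertibility. I would briefly recall from that proof that the twist circles bounding each block can be positioned as nested circles in a common horizontal plane $H$, that $\hat\beta_0\cup L$ admits a $\pi$-rotation axis $\bar u\subset H$ meeting each component in two points, and that Montesinos' principle (\cite{M}) transports this involution across the $1/e$-surgeries on $L$ that convert $\hat\beta_0$ into $\hat\beta$, keeping the axis intact and still meeting each surviving component of $\hat\beta$ in two points.

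The only conceivable obstacle is the case with a negative twist on a sub-block, but this is already absorbed into $\mathcal{P}$ and its twist-circle description, so it is handled uniformly by the same Dehn-surgery-preserves-strong-invertibility argument; no extra work is needed. Thus the proof is essentially a two-line invocation: Theorem \ref{main} places $\beta$ in $\mathcal{P}$, and Lemma \ref{strongly} then yields the strong invertibility of $\hat\beta$.
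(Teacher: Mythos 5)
Your proposal matches the paper's own proof exactly: the paper derives Theorem \ref{artinstrongly} as an immediate consequence of Theorem \ref{main} (placing $\beta$ in $\mathcal{P}$) and Lemma \ref{strongly} (strong invertibility for closures of braids in $\mathcal{P}$). The additional remarks you supply about the rotation axis and the negative-twist case are faithful recollections of the content of Lemma \ref{strongly} rather than a departure from the paper's route.
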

\begin{proof}
This follows from Theorem \ref{main} and Lemma \ref{strongly}.
 \end {proof}

 \begin{theorem}\label{doublecover}
     Let $M$ be a closed 3-manifold whose fundamental group admits a positive artinian presentation, then $M$ is obtained by Dehn surgery on a strongly invertible link. In particular, $M$ is a double cover of $S^3$ branched along certain link. Furthermore, $M$ is a (possibly degenerate) graph manifold.
 \end{theorem}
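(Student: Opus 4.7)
The proof assembles the results already established in this section together with one classical input from Montesinos theory. First, the positive Artin presentation of $\pi_1(M)$ produces, via Theorem \ref{construction}, a framed pure $n$-braid $\beta$ such that $M$ is obtained by Dehn surgery on $\hat{\beta}$. Since the presentation is positive, Theorem \ref{main} forces $\beta$ to belong to $\mathcal{P}$. Theorem \ref{artinstrongly} then yields that $\hat{\beta}$ is strongly invertible, which is the first assertion.

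For the branched double cover statement, let $\tau: S^3 \to S^3$ be the involution witnessing the strong invertibility of $\hat{\beta}$, so that $\mathrm{Fix}(\tau)$ is an unknotted circle meeting each component of $\hat{\beta}$ in exactly two points. On each peripheral torus of the exterior of $\hat{\beta}$, the induced involution is conjugate to the standard elliptic involution, which preserves every isotopy class of essential simple closed curves. In particular it preserves the surgery slopes, so $\tau$ extends to an involution $\tilde{\tau}$ on $M$. By the classical Montesinos construction, $M/\tilde{\tau}$ is homeomorphic to $S^3$ and the branch locus of $M \to S^3$ is obtained from the image of $\mathrm{Fix}(\tau) \cup \hat{\beta}$ in the quotient, exhibiting $M$ as a double cover of $S^3$ branched along a link.

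Finally, the assertion that $M$ is a (possibly degenerate) graph manifold is exactly Lemma \ref{surgery} applied to $\beta \in \mathcal{P}$. The only step requiring genuine attention is the verification that $\tau$ extends across the Dehn fillings on the components of $\hat{\beta}$, but this is a standard property of the elliptic involution on peripheral tori; all remaining ingredients have been proved in the earlier results of this section, so the theorem reduces to their combination.
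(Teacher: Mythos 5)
Your proof is correct and follows essentially the same route as the paper, which simply cites Theorem \ref{main}, Theorem \ref{artinstrongly}, the Montesinos trick \cite{M}, and Lemma \ref{surgery}; you merely spell out the extension of the involution across the Dehn fillings, which the paper leaves implicit. (One small imprecision: the branch locus is the image of the fixed-point set of the extended involution, i.e.\ $\mathrm{Fix}(\tau)$ modified by the arcs in the filling solid tori, not of $\mathrm{Fix}(\tau)\cup\hat\beta$; this does not affect the argument.)
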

 
 \begin{proof}
 This follows from Theorems \ref{main} \ref{artinstrongly}, by the Montesinos trick \cite{M}, and by Lemma \ref{surgery}
 \end{proof}

 Although we can say that the manifolds with positive Artin presentations are certain graph manifolds, we have not given a precise description of such manifolds.
 
\section{Examples}\label{examples}

In this section we give examples of integral framed, closed pure $n$-braids  $\hat \beta$, which produce positive Artin presentations.

\vskip 10pt

\begin{theorem} \label{3pure}
Let $\beta $ be a pure $3$-braid, and $\hat \beta $ the integral framed $(m_1, m_2, m_3)$ link  obtained by closing the braid.  Then $\hat \beta$  produces  a $3$-manifold which fundamental group admits a positive Artin presentation, if and only if one of the following conditions is satisfied:
\begin{enumerate}

\item 
$$\hat \beta = \widehat {\sigma_1^{2e_1}({\sigma_2\sigma_1\sigma_2})^{2e}} $$
  with  $m_1\geq e+e_1$, $m_2 \geq e+e_1\geq 0$, $m_3 \geq e$, $e\geq 0$, $e_1\geq 0$.
  \item
  $$\hat \beta = \widehat {\sigma_2^{2f_1}({\sigma_2\sigma_1\sigma_2})^{2e}} $$
  with  $m_1 \geq e$, $m_2 \geq e+f_1$, $m_3 \geq e+f_1$, $e\geq 0$, $f_1\geq 0$,
  or with $m_1\geq e$, $m_2\geq e-1$, $m_3 \geq e-1$, $e\geq 1$, $f_1=-1$.

  \end{enumerate}
  
\end{theorem}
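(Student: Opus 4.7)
The plan is to specialize Theorem \ref{main} to $n=3$ and make the set $\mathcal{P}$ of admissible pure $3$-braids completely explicit. Theorem \ref{main} reduces the problem to three tasks: (i) enumerating the partitions of the three chords together with their twist decorations, (ii) rewriting each resulting braid in the standard generators $\sigma_1,\sigma_2$, and (iii) reading off from the associated positive artinian diagram the range of framings $(m_1,m_2,m_3)$ that render the diagram genuinely positive. Once this enumeration is complete, both directions of the iff follow at once: every braid in $\mathcal{P}$ produces a positive Artin presentation as long as the framings lie in the stated range, and conversely Theorem \ref{main} forces any positive-Artin braid into $\mathcal{P}$.

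For the enumeration, the partitions of $\{1,2,3\}$ into blocks of consecutive chords are $\{\{1\},\{2\},\{3\}\}$, $\{\{1\},\{2,3\}\}$, $\{\{1,2\},\{3\}\}$ and $\{\{1,2,3\}\}$. Each block of size two or three may carry a nonnegative number of positive full twists, yielding $\sigma_1^{2e_1}$, $\sigma_2^{2f_1}$ and $(\sigma_2\sigma_1\sigma_2)^{2e}$ respectively, and the nesting rule in the definition of $\mathcal{P}$ allows the size-three twist to be combined with exactly one size-two sub-twist, giving the braids $\sigma_1^{2e_1}(\sigma_2\sigma_1\sigma_2)^{2e}$ and $\sigma_2^{2f_1}(\sigma_2\sigma_1\sigma_2)^{2e}$ appearing in (1) and (2). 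The negative-twist clause permits $f_1=-1$ or $e_1=-1$ whenever the enclosing twist satisfies $e\geq 1$, and the braid automorphism $\sigma_1\leftrightarrow\sigma_2$ (which fixes $(\sigma_2\sigma_1\sigma_2)^{2e}$ by the braid relation and realises the left-right flip of the closed braid in $S^3$, swapping $m_1$ with $m_3$) identifies the two negative-twist families as the same link, so they collapse into the single exceptional subcase recorded in (2).

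To extract the framing inequalities I would draw the positive artinian diagram for $\sigma_1^{2e_1}(\sigma_2\sigma_1\sigma_2)^{2e}$ following the recipe in the proof of Theorem \ref{main} (compare Figure \ref{3diagram}) and count how many times each path $\gamma_i$ wraps around its hole $V_i$. The chord $\gamma_3$ is involved only in the outer $e$ full twists on the three strands, hence contributes at least $e$ wraps around $V_3$, while $\gamma_1$ and $\gamma_2$ participate additionally in the $e_1$ inner twists and contribute at least $e+e_1$ wraps around their respective holes; any extra wraps correspond to framing adjustments, giving the inequalities of case (1). The same counting applied to $\sigma_2^{2f_1}(\sigma_2\sigma_1\sigma_2)^{2e}$ produces the inequalities of case (2). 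In the subcase $f_1=-1$, the inner negative full twist on the pair $\{2,3\}$ cancels one positive full twist along chords $2$ and $3$ only, leaving the lower bound on $m_1$ unchanged while lowering the bounds on $m_2,m_3$ by exactly one, which is precisely the stated inequalities.

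The main obstacle is the careful bookkeeping in step (iii), and in particular the verification that the negative-twist subcase produces no hidden additional family beyond the shifted inequalities. This reduces to a direct inspection of Figure \ref{3diagram} D together with the symmetry argument identifying the $e_1=-1$ and $f_1=-1$ cases; both are concrete but demand attention to the sign conventions for $\sigma_1,\sigma_2$ and for positive full twists.
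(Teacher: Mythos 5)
Your proposal is correct and follows essentially the same route as the paper: both directions rest on Theorem \ref{main} to restrict to the enumerated braids $\sigma_1^{2e_1}(\sigma_2\sigma_1\sigma_2)^{2e}$ and $\sigma_2^{2f_1}(\sigma_2\sigma_1\sigma_2)^{2e}$, and the framing inequalities are then obtained by inspecting the exponent of $x_j$ at the end of each relator --- the paper does this via the explicit relators from \cite{A} (derived from the Fadell--Neuwirth normal form for pure $3$-braids) rather than by counting wraps in the diagram, which is the same bookkeeping. One minor remark: your $\sigma_1\leftrightarrow\sigma_2$ symmetry step is unnecessary, since the definition of $\mathcal{P}$ only permits the negative full twist on the \emph{later} sub-block of a nested pair, so the case $e_1=-1$ never arises and there are no two families to collapse.
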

 
 \begin{proof}
 By \cite{FN}, it follows that we have a diagram representing a pure $3$-braid (see Figure 3 in \cite{A}), in particular in Figure \ref{3braidsmall} we have such a diagram for the small case where we have only 3 boxes, $e, e_1, f_1$, which means that $\beta = {\sigma_1^{2e_1}\sigma_2^{2f_1}({\sigma_2\sigma_1\sigma_2})^{2e}} $. Observe that in the small case, all closed, pure $3$-braids are strongly invertible links. From \cite {A}, we know that an Artin presentation of the fundamental group of a $3$-manifold obtained by integral Dehn surgery on $\hat \beta$, with surgery coefficients $m_1,m_2,m_3$ is given by the generators $x_1, x_2, x_3$ and relations (this is in fact a cyclic permutation of the presentation given in \cite{A}):
 
 $r_1 = (x_1x_2x_3)^{e}  (x_1 (x_2x_3)^{f_1}x_2(x_2x_3)^{-f_1})^{e_1}x_1^{m_1-e-e_1} $

$ r_2 = (x_1x_2x_3)^{e}(x_2x_3)^{f_1} (x_1 (x_2x_3)^{f_1}
x_2(x_2x_3)^{-f_1})^{e_1}x_2^{m_2-e-e_1-f_1}$

$r_3 =(x_1x_2x_3 )^e  (x_2x_3 )^{f_1}x_3^{m_3-e-f_1}$ 

By Theorem \ref{main}, for this presentation to be positive is required that the 3-braid is small, and furthermore that $e_1=0$ or $f_1=0$. 

Suppose first that $f_1=0$. In this case the presentation is given by 

$r_1 = (x_1x_2x_3)^{e}  (x_1x_2)^{e_1}x_1^{m_1-e-e_1} $

$ r_2 = (x_1x_2x_3)^{e}(x_1x_2^{e_1}x_2^{m_2-e-e_1}$

$r_3 =(x_1x_2x_3 )^e x_3^{m_3-e}$ 

Also, $e_1$ cannot be negative, so $e_1\geq 0$, and then for the presentation to be positive is required that $m_1\geq e+e_1$, $m_2 \geq e+e_1\geq 0$, $m_3 \geq e$, $e\geq 0$, $e_1\geq 0$.

Suppose now that $e_1=0$. In this case the presentation is given by

$r_1 = (x_1x_2x_3)^{e}x_1^{m_1-e} $

$ r_2 = (x_1x_2x_3)^{e}(x_2x_3)^{f_1}x_2^{m_2-e-f_1}$

$r_3 =(x_1x_2x_3 )^e  (x_2x_3 )^{f_1}x_3^{m_3-e-f_1}$ 

If $f_1\geq 0$, then for the presentation to be positive is required that $m_1 \geq e$, $m_2 \geq e+f_1$, $m_3 \geq e+f_1$, $e\geq 0$, $e_1\geq 0$. 

If $f_1=-1$, then for the presentation to be positive is required that $m_1\geq e$, $m_2\geq e-1$, $m_3 \geq e-1$, $e\geq 1$.
\end{proof}

\begin{figure}[h]
\centering
\includegraphics[width=3cm]{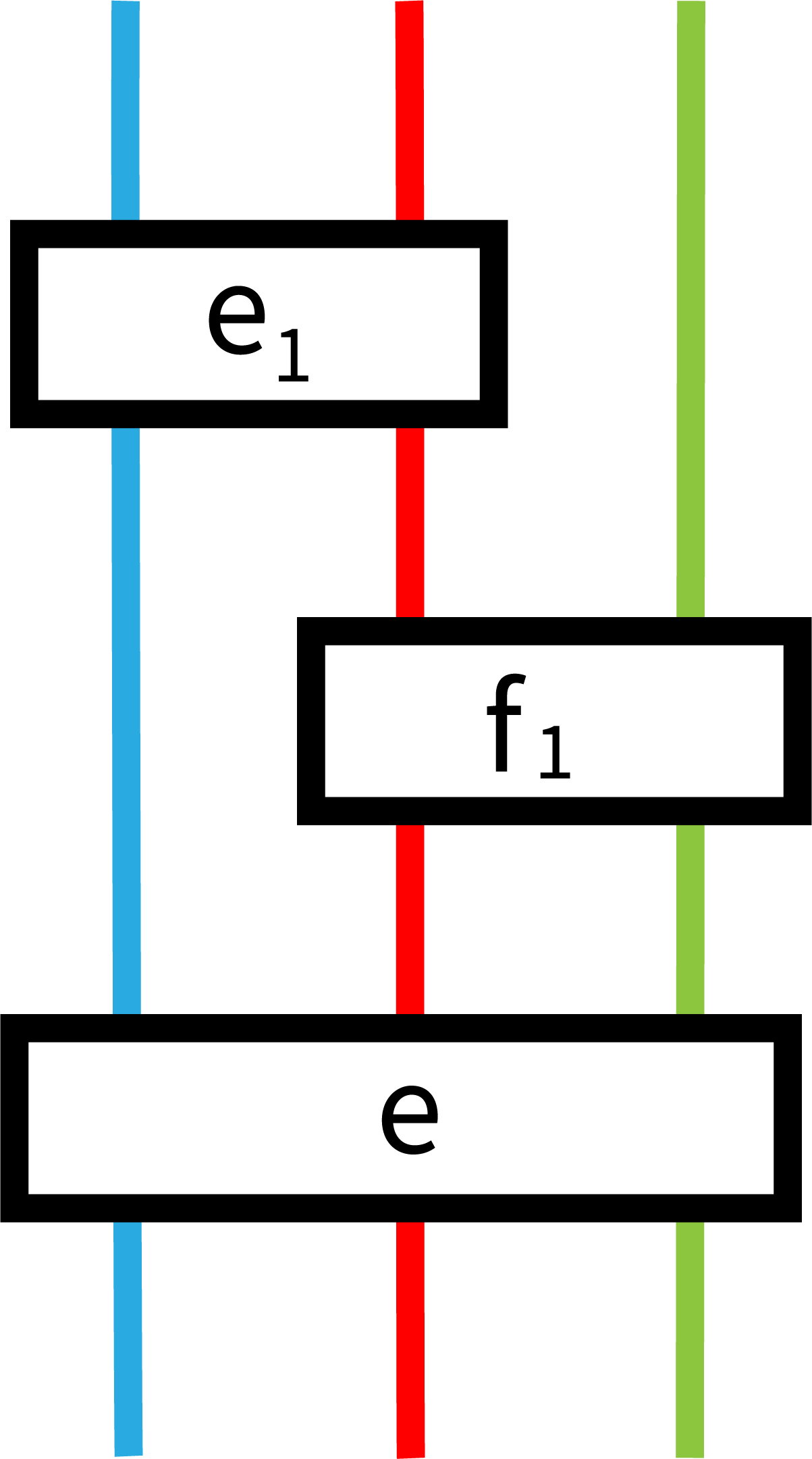} 
\caption{Small type.}
\label{3braidsmall}
\end{figure}

 It can be seen from Theorem 3.3, that there are  strongly invertible links $\hat \beta$ which do not produce a positive Artin presentation. In Figure \ref{nonpositives} we give some examples of nonpositive Artin presentations. These come from small pure 3-braids, hence strongly invertible links.

\begin{figure}[h]
\centering
\includegraphics[width=10cm]{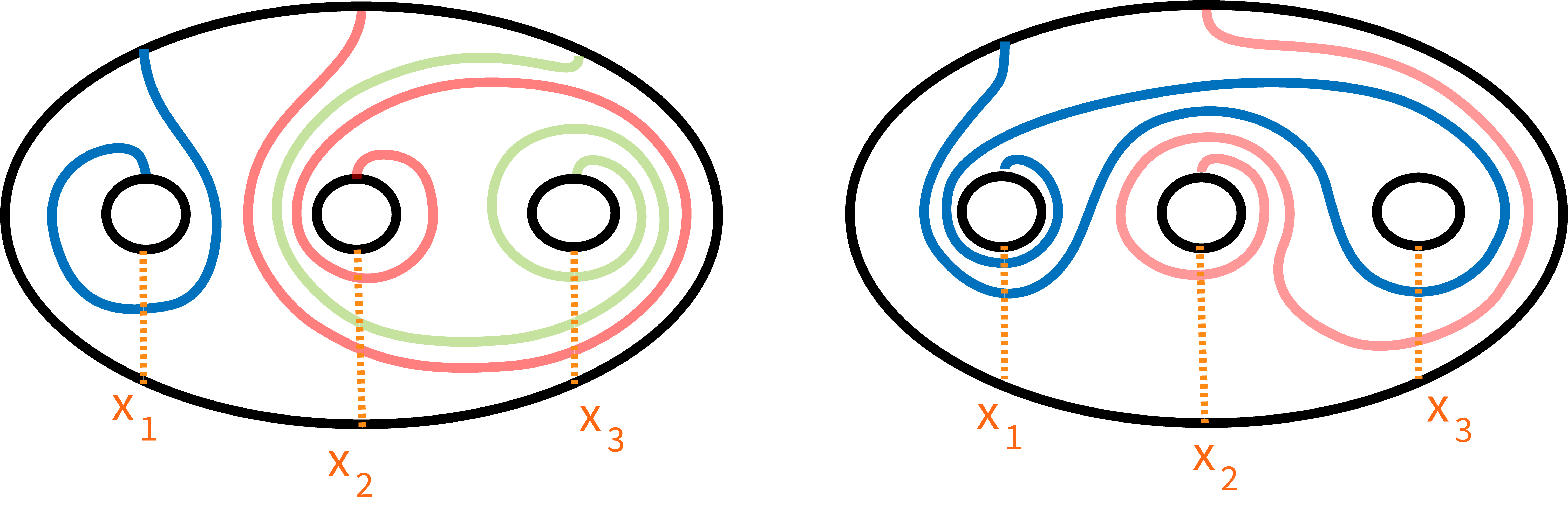} 
\caption{Examples of nonpositive Artin presentations.}
\label{nonpositives}
\end{figure}

\vskip 20pt

Now, we give some calculations before stating  the next proposition. In this part, we  will calculate the fundamental group of a 3-manifold obtained from an integral framed link $\hat \beta$ of $n$ chords, such that $\beta \in \mathcal{P}$. 

 Before continuing, let us  remember the operation  $''\circ''$ defined in \cite {A}.  
Let
$r, s$ be two $n$-Artin presentations, where $r= ( r_1,r_2,\cdots, r_n)$ and  $s= ( s_1, s_2,\cdots , s_n)$, 
and $s\circ r = s\cdot r^{s}$, 
with $r^{s} = (r_1^{s},\cdots, r_n^{s})$, where $r_i^s$ means that we are taking the conjugate of each $x_j$ appearing in the relation $r_i$ by $s_j$, i.e. $s_j^{-1}x_j s_j$ and then we consider the product $\prod s_j^{-1}x_js_j$.  Also $s\cdot r^s = (s_1r_1^s, s_2r_2^s,\cdots , s_nr_n^s)$. 

Now we see an example to illustrate the operation.

Consider the Artin presentations given by $r_1 = x_1x_2x_3x_1^2, r_2 = x_1x_2x_3x_2x_3x_2, r_3  =  x_1x_2x_3 x_2x_3^2$, such that  $r= (r_1, r_2, r_3)$ and $s_1 = x_1x_2x_3x_1, s_2 = x_1x_2x_3x_2^2, s_3 =  x_1x_2x_3^2$, so $s= (s_1, s_2, s_3)$. Then  we calculate $r^s$:

\noindent $ r_1^s = (x_1x_2x_3x_1x_1 ) ^s = (s_1^{-1}x_1s_1)(s_2^{-1}x_2s_2)(s_3^{-1}x_3s_3)(s_1^{-1}x_1s_1)(s_1^{-1}x_1s_1)$

\noindent $ r_2^s = (x_1x_2x_3x_2x_3x_2)^s = (s_1^{-1}x_1s_1)(s_2^{-1}x_2s_2)(s_3^{-1}x_3s_3)(s_2^{-1}x_2s_2)(s_3^{-1}x_3s_3)(s_2^{-1}x_2s_2)$

\noindent $ r_3^s = (x_1x_2x_3x_2x_3x_3)^s = ( s_1^{-1}x_1s_1)(s_2^{-1}x_2s_2)( s_3^{-1}x_3s_3)( s_2^{-1}x_2s_2)(s_3^{-1}x_3s_3)(s_3^{-1}x_3s_3)$

\noindent So,
  $$s\cdot r^{s} = ( x_1x_2x_3x_1 r_1^s, x_1x_2x_3x_2^2 r_2^s, x_1x_2x_3^2 r_3^s)$$
from \cite {A} it follows  that this is an artinian $n$-presentation.

\vskip 10pt

Now we consider the next artinian $n$-presentations given for an element of $\mathcal{P}$, where $n=k_1+k_2+k_3$.
Let  $ \Delta _{k_1}^{e_1}$  and $ \Delta _{k_2}^{e_2}$ be two Artin presentations,  given by:

\noindent $$ \Delta _{k_1}^{e_1} = \big ( ( x_1x_2x_3\cdots  x_{k_1})^{e_1},  \cdots , ( x_1x_2x_3\cdots  x_{k_1})^{e_1}, 1, 1, 1, \cdots ,1\big)$$

\noindent  i.e. the first $k_1$ entries are equal to $( x_1x_2x_3\cdots  x_{k_1})^{e_1}$ and the last $n-k_1$ entries are the identity,

\noindent $$ \Delta _{k_2}^{e_2} = \big ( 1, \cdots, 1, ( x_{k_1+1} x_{k_1+2}\cdots x_{k_1+k_2})^{e_2},  \cdots , ( x_{k_1+1} x_{k_1+2} \cdots  x_{k_1+k_2})^{e_2},  1,\cdots ,1\big)$$

\noindent where the first $k_1$ entries are $1$,  the following $k_2$ entries are $( x_{k_1+1} x_{k_1+2}\cdots x_{k_1+k_2})^{e_2}$  and finally  the last $n-k_1-k_2$  terms are the identity.
 Then, we calculate $ \Delta _{k_2}^{e_2} \circ  \Delta _{k_1}^{e_1}$.  Observe that we conjugate the first $k_1$ terms  by the identity,  and  get: 

$$ \big ( \Delta _{k_1}^{e_1} \big ) ^{\Delta _{k_2}^{e_2}} = \big ( ( x_1x_2x_3,\cdots x_{k_1})^{e_1},  \cdots , ( x_1x_2x_3\cdots  x_{k_1})^{e_1}, 1, 1, 1, \cdots ,1\big)$$

\noindent So, the same artinian $n$-presentation $\Delta _{k_1}^{e_1}$ is obtained. Now we have just to calculate the product entry by entry of these  two Artin presentations to get:

\noindent $$\big ( ( x_1x_2x_3\cdots  x_{k_1})^{e_1},  \cdots , ( x_1x_2x_3\cdots  x_{k_1})^{e_1},  ( x_{k_1+1}x_{k_1+2}\cdots  x_{k_1+k_2})^{e_2},  \cdots,$$ $$\cdots , 
( x_{k_1+1}x_{k_1+2}\cdots x_{k_1+k_2})^{e_2}, 1, \cdots ,1\big)$$

\noindent where the first $k_1$ entries are equal to $( x_1x_2x_3\cdots  x_{k_1})^{e_1}$, 
the following $k_2$ entries are $( x_{k_1+1} x_{k_1+2}\cdots x_{k_1+k_2})^{e_2}$  and finally  the last $n-k_1-k_2$  terms are the identity.

Now we calculate the Artin presentation for some specific braids in $\mathcal{P}$. Suppose that $\beta \in \mathcal{P}$ is an element of $n$ chords, where we have the partition $P_{k_1}, P_{k_2}, P_{k_3}$ and $P_{k_4}$ such that $k_1+k_2+k_3+k_4 =n$. And let $P_{k_2}\cup P_{k_3}$ and  $ P_{k_1} \cup P_{k_2} \cup P_{k_3} \cup P_{k_4}$  be two blocks, one completely contained in the other. Suppose that all of the full twists performed, $e_{k_1+k_2+k_3+k_4}$, $e_{k_2+k_3}$, $e_{k_1}$, $e_{k_2}$, $e_{k_3}$, $e_{k_4}$, are positive. We calculate the relations.

 \begin{proposition}\label{calculo1} Let  $\hat \beta$ be an integral framed closed pure $n$-braid  with   frame\break   $(m_1, m_2,\cdots , m_n)$, such that $\beta \in \mathcal{P}$. Suppose that $\beta$ has a decomposition as described above.
 Then the associated Artin presentation has  generators $x_1, x_2,\cdots , x_n$ and  relations:

 \noindent 1) for $1\leq j\leq  k_1$
 
 $$r_j = (x_1x_2\cdots x_n)^{e_{\sum_{i=1}^4 k_i}}(x_1x_2x_3\cdots x_{k_1})^{e_{k_1}}x_j^{m_j-e_{k_1} -e_{(\sum_{i=1}^4k_i)}}$$

\noindent  2) for $k_1+1 \leq j \leq k_1+k_2$

$$r_{j} = (x_1x_2\cdots x_n)^{e_{(\sum_{i=1}^4 k_i)}}(x_{k_1+1}x_{k_1+2}x_{k_1+3}.
\cdots x_{k_1+k_3})^{e_{k_2+k_3}}\cdots$$

 $$\cdots(x_{k_1+1}x_{k_1+2}x_{k_1+3}\cdots x_{k_1+k_2})^{e_{k_2}}x_{j}^{m_{j}-e_{k_{2}}-(e_{k_2+k_3})-e_{(\sum_{i=1}^4 k_i)}}
$$

 \noindent  3) for $k_1+k_2+1 \leq j \leq k_1+k_2 +k_3$

$$ r_{j} = (x_1x_2\cdots x_n)
 ^{e_{({\sum_{i=1}^4k_i})}}(x_{k_1+1} x_{k_1+2}x_{k_1+3}\cdots x_{k_1+ k_3})^{e_{k_2+k_3}}\cdots$$
 
 $$\cdots(x_{k_1+k_2+1}x_{k_1+k_2+2}x_{k_1+k_2+3}\cdots x_{k_1+k_2+k_3})^{e_{k_3}}x_{j}^{m_{j} - e_{k_{3}}-(e_{k_2+k_3})-e_{(\sum_{i=1}^4 k_i)}}$$

 \noindent 4) for $k_1+k_2+k_3+1\leq j\leq  k_1+k_2+k_3+k_4$

$$r_j = ( x_1x_2\cdots x_n)^{e_{(\sum_{i=1}^4k_i)}} (x_1x_2x_3\cdots x_{k_1})^{e_{k_4}}x_j^{m_j-e_{k_4} -e_{(\sum _{i=1}^4k_i)}}$$
 
  \end{proposition}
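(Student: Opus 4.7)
My approach is to decompose $\beta$ as an ordered product of elementary positive full-twist braids, one for each element of the partition $\{P_{k_1},P_{k_2},P_{k_3},P_{k_4}\}$, one for the nested union $P_{k_2}\cup P_{k_3}$, and one for the total block $P_{k_1}\cup P_{k_2}\cup P_{k_3}\cup P_{k_4}$, together with diagonal framing braids $\sigma_j^{2m_j}$ on each strand. The Artin presentation of $\beta$ is then built by iterating the composition operation $\circ$ recalled just before the proposition statement.

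\textbf{Step 1 (elementary pieces).} A direct picture on the disk with holes shows that a single full twist $\Delta_B^e$ on a block $B$ of consecutive strands $\{a,a{+}1,\ldots,b\}$ has Artin presentation whose $j$-th relation equals $(x_a x_{a+1}\cdots x_b)^e$ for $a\leq j\leq b$ and is the identity otherwise. The diagonal framing braid on strand $j$ contributes $x_j^{m_j}$ only in position $j$.

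\textbf{Step 2 (two composition rules).} I will verify, as local versions of the operation $\circ$, two rules that already appear in the worked example preceding the proposition. First, if $\mathrm{supp}(s)$ and $\mathrm{supp}(r)$ are disjoint, then for each $j$ the conjugator $s_j$ is trivial on every generator occurring in $r_j$ and vice versa, so $r^s=r$ and $s\circ r$ reduces to the componentwise concatenation of $s$ and $r$. Second, if $\mathrm{supp}(r)\subset\mathrm{supp}(s)$, then all relevant conjugators $s_j$ equal the same word $W=(\prod_{i\in\mathrm{supp}(s)}x_i)^{e_s}$; the telescoping identity $\prod_i(W^{-1}x_iW)=W^{-1}(\prod_i x_i)W$ collapses $r_j^s$ to $W^{-1}r_jW$, and multiplying by $s_j=W$ yields exactly one appearance of the outer word immediately followed by the inner word in slot $j$.

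\textbf{Step 3 (assembly and main obstacle).} Applying these rules successively to the decomposition
\[
\beta \;=\; \Delta_n^{e_{\sum k_i}}\cdot \Delta_{k_2+k_3}^{e_{k_2+k_3}}\cdot \Delta_{k_1}^{e_{k_1}}\cdot \Delta_{k_2}^{e_{k_2}}\cdot \Delta_{k_3}^{e_{k_3}}\cdot \Delta_{k_4}^{e_{k_4}}
\]
followed by the diagonal framing produces a presentation in which the $j$-th relation lists, from left to right, the twist word of every block containing $j$---read outermost to innermost---terminated by $x_j^{m_j-\Sigma_j}$, where $\Sigma_j$ is the sum of the exponents of all twists acting on $j$. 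The four cases of the statement correspond precisely to the four nesting chains $P_{k_1}\subset\text{total}$, $P_{k_2}\subset P_{k_2+k_3}\subset\text{total}$, $P_{k_3}\subset P_{k_2+k_3}\subset\text{total}$, and $P_{k_4}\subset\text{total}$, each yielding exactly the claimed formula. The main bookkeeping difficulty is verifying the nested case of Step 2: checking that the telescoping of conjugators gives the desired ``outermost first'' ordering within each $r_j$ and that this ordering is preserved under further compositions with inner twists whose supports are mutually disjoint.
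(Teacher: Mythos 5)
Your overall strategy is the same as the paper's: factor $\beta$ into elementary full-twist blocks plus the diagonal framing, and iterate the operation $\circ$ on the corresponding elementary Artin presentations (the paper's proof is exactly this, stated in one line as ``it follows from the previous calculations and the observation that the order in which the blocks are twisted can be interchanged''). Your Step~1 and the disjoint-support half of Step~2 are fine and reproduce the paper's displayed computation of $\Delta_{k_2}^{e_2}\circ\Delta_{k_1}^{e_1}$.

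The gap is in the nested half of Step~2, and it is exactly the point you yourself flag as unresolved. With $s$ the outer twist ($s_j=W$ on its support) and $r$ the inner one, your telescoping correctly gives $r_j^s=W^{-1}r_jW$, but then $s_jr_j^s=W\cdot W^{-1}r_jW=r_jW$: the \emph{inner} word followed by the \emph{outer} word, not ``the outer word immediately followed by the inner word'' as you assert and as the proposition requires. Since the inner block word and $W$ do not commute in $F_n$, this is not a cosmetic difference. Test it on $n=2$ with $s=(x_1x_2,\,x_1x_2)$ and $r=(x_1,\,1)$: the literal recipe yields $(x_1^2x_2,\,x_1x_2)$, which does not even satisfy the Artin identity $\prod r_ix_ir_i^{-1}=\prod x_i$, whereas the proposition's outer-first word $(x_1x_2x_1,\,x_1x_2)$ does. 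So your assembly in Step~3 does not follow from the rules you have established; you must either re-derive the composition with the correct convention (e.g.\ verify that the appropriate order of factors, $r^s\cdot s$ with the conjugation taken in the other direction, produces $Wr_j$ and preserves the Artin condition), or bypass the algebra and read the word directly from the geometry: the path $\gamma_j$ starts at the outer boundary $\partial W_0$, so it traverses the outermost twist region first, which is what forces the outermost block word to appear leftmost in $r_j$. Until that ordering is actually verified, the proof is incomplete at the only step that carries real content.
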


 \begin{proof} It follows from the previous calculations and the observation that the order in which the blocks are twisted can be interchanged.  \end{proof}
 
 It is easy to see that in an arbitrary element $\beta \in \mathcal{P}$,  in each relation we added only the  product corresponding to one block where the chord appears.
 
\vskip10pt

 \vskip 10pt
 
 \begin{theorem}\label{calculo2}
Let  $\beta \in \mathcal{P}$ such that all twists performed are positive. Let $\hat \beta $ the   integral framed link with frame $(m_1, m_2,\cdots , m_n)$  obtained by closing the braid. Then  $\hat \beta$  admits a positive Artin $n$-presentation if and only if  for all  $j$, 
 $$ m_j -  \big (\sum{ e_{k_j} }\big ) \geq 0 $$
\noindent where the sum runs over all the blocks formed by unions of the $P_{k_j}$'s  containing the $j$-th chord.

\end{theorem}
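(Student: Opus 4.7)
The plan is to invoke the explicit computation of Proposition \ref{calculo1}, and its natural extension to an arbitrary nested block structure in $\mathcal{P}$, which yields for each chord $j$ a relation of the form
\[
r_j \;=\; W_j \cdot x_j^{\,m_j - \sum e_{k_j}},
\]
where $W_j$ is the product of the positive block-words of the form $(x_{i_1}x_{i_2}\cdots x_{i_r})^{e}$ corresponding to each block of the nested partition that contains chord $j$, and the sum in the exponent ranges over the twist exponents of exactly those blocks.

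For the forward implication, assume $m_j - \sum e_{k_j} \geq 0$ for every $j$. Since every twist exponent is nonnegative by hypothesis, each factor entering $W_j$ is already a positive word in the generators, so $W_j$ is positive; and the trailing factor $x_j^{\,m_j - \sum e_{k_j}}$ is a nonnegative power of $x_j$. Hence every $r_j$ is a positive word, and the resulting Artin $n$-presentation is positive.

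For the converse, observe that the Artin presentation associated to $\beta \in \mathcal{P}$ is determined by the isotopy class in $W$ of the disjoint paths $\gamma_j$ produced by the proof of Theorem \ref{main}, which are in turn dictated by the nested block data of $\beta$ together with the framing $(m_1,\ldots,m_n)$. Thus the relation $r_j$ is literally the word displayed above. For $r_j$ to be positive, the trailing factor $x_j^{\,m_j - \sum e_{k_j}}$ cannot introduce any negative power of $x_j$, so its exponent must satisfy $m_j - \sum e_{k_j} \geq 0$.

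The main obstacle is to verify that the four-block formula of Proposition \ref{calculo1} extends to an arbitrary element of $\mathcal{P}$. The key observation is that the operation $\circ$ simplifies drastically for nested or disjoint blocks: when one computes $\Delta_{k_i}^{e_i} \circ \Delta_{k_j}^{e_j}$ with nested or disjoint supports, the conjugation on the entries of $\Delta_{k_j}^{e_j}$ by $\Delta_{k_i}^{e_i}$ is the identity on each nontrivial slot, exactly as illustrated in the calculation preceding Proposition \ref{calculo1}. Consequently one can build up the full Artin presentation of any $\beta \in \mathcal{P}$ by iteratively composing the block presentations in an order compatible with the nesting, and the resulting word in the $j$-th coordinate is simply the product, in the correct order, of one factor $(x_{i_1}x_{i_2}\cdots x_{i_r})^{e}$ for every block containing chord $j$, followed by the framing adjustment $x_j^{\,m_j - \sum e_{k_j}}$. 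Once this extended formula is in hand, the theorem follows immediately as above.
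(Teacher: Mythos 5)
Your proposal is correct and follows essentially the same route as the paper: the paper's proof likewise reduces the statement to the explicit computation of Proposition \ref{calculo1}, observing that each relation is a product of positive block-words terminating in $x_j^{\,m_j - \sum e_{k_j}}$, so positivity holds exactly when each such trailing exponent is nonnegative. Your additional remarks on extending the four-block formula via the $\circ$ operation on nested blocks fill in the same step the paper leaves implicit.
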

 
 \begin{proof}
 An artinian presentation can be calculated just as in Proposition \ref{calculo1}. Each relation will be positive, except at the end, where the relation terminates in a word $x_j^{m_j -  \big (\sum{ e_{k_j} }\big ) }$.
 \end{proof}

\vskip20pt

According to Theorem \ref{3pure}, for a closed pure 3-braid to produce a positive Artin presentations, the braid must belong to a very restrictive class. Furthermore, there are 3-braids in the small case, that do not produce a positive Artin presentation, but such that the closed braid is strongly invertible. In this examples is possible that the given manifolds have some other presentation as surgery on a closed braid that produce a positive Artin presentation.

On the other hand
  the next example shows  that there are $3$-manifolds whose fundamental group does not admit a positive artinian $n$-presentation.
  
 This example is the $3$-torus $S^1\times S^1\times S^1$. It is possible to say that it does not have a positive 
Artin presentation since this $3$-manifold is not a double branched cover of the $3$-sphere \cite{M1}, it means that it is not obtained by surgery on a strongly invertible link.
 
 An artinian $n$-presentation of the 3-torus is the following, which was given by  F. Gonz\'alez-Acu\~na, see Figure \ref{3toro}.
 
 $$\pi_1(M) = < x_1, x_2, x_3 \, \vert \, x_1x_3x_1^{-1}x_3^{-1}x_2x_3x_1x_3^{-1}x_1^{-1}x_3^{-1}x_2^{-1}x_3, \, x_1x_3x_1^{-1}x_3^{-1}, \, x_3^{-1}x_2^{-1}x_3x_1^{-1}x_3^{-1}x_2x_3x_1>$$

\begin{figure}
\centering
\includegraphics[width=8cm]{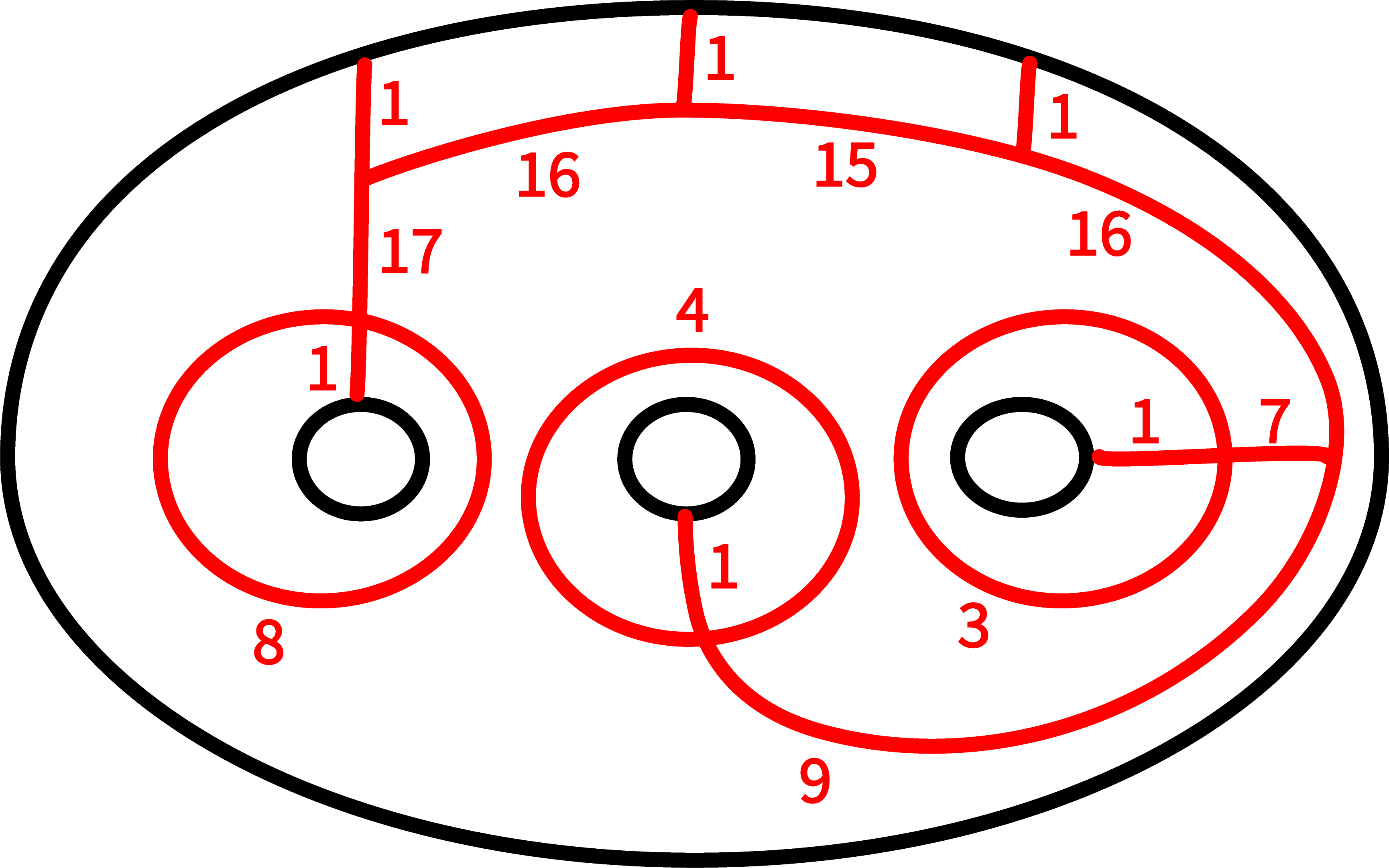} 
\caption{3-torus.}
\label{3toro}
\end{figure}

Here, the diagram of the disk with holes and paths has been simplified. Each arc represents a set of arcs as indicated by the numbers.

\vskip10pt

\bibliographystyle{amsplain}

\end{document}